\documentclass[12pt,leqno,a4paper] {amsart}
\usepackage[utf8]{inputenc}
\usepackage{amssymb,enumerate,blkarray,rotating}
\overfullrule 5pt

\usepackage{multirow}

\usepackage{tikz}

\textwidth160mm
\oddsidemargin5mm
\evensidemargin5mm

\newcommand{\QQ}{{\mathbb{Q}}}
\newcommand{\FF}{{\mathbb{F}}}
\newcommand{\ZZ}{{\mathbb{Z}}}

\newcommand{\bB} {\mathbf B}
\newcommand{\bC} {\mathbf C}
\newcommand{\bG} {\mathbf G}
\newcommand{\bL} {\mathbf L}
\newcommand{\bM} {\mathbf M}
\newcommand{\bP} {\mathbf P}
\newcommand{\bT} {\mathbf T}
\newcommand{\bU} {\mathbf U}
\newcommand{\bZ} {\mathbf Z}

\newcommand{\cL} {\mathcal L}

\newcommand{\Ind}{{\operatorname{Ind}}}
\newcommand{\Irr}{{\operatorname{Irr}}}
\newcommand{\uni}{{\operatorname{uni}}}
\newcommand{\Trace}{{\operatorname{Trace}}}
\newcommand{\GL}{\operatorname{GL}}
\newcommand{\SL}{\operatorname{SL}}
\newcommand{\SU}{\operatorname{SU}}
\newcommand{\SO}{\operatorname{SO}}
\newcommand{\Spin}{{\operatorname{Spin}}}
\newcommand{\QLG}{{Q_\bL^\bG}}
\newcommand{\RLG}{{R_\bL^\bG}}
\newcommand{\RTL}{{R_\bT^\bL}}
\newcommand{\RTG}{{R_\bT^\bG}}

\newcommand{\tbG}{{\tilde\bG}}
\newcommand{\tbL}{{\tilde\bL}}

\newcommand{\tQ}{{\tilde Q}}
\newcommand{\tQLG}{{\tilde Q_\bL^\bG}}
\newcommand{\tRTG}{{\tilde R_\bT^\bG}}
\newcommand{\tRTL}{{\tilde R_\bT^\bL}}
\newcommand{\hlf}{\frac{1}{2}}
\newcommand{\pl}{{\!+\!}}
\newcommand{\mn}{{\!-\!}}
\newcommand{\tw}[1]{{}^{#1}\!}
\newcommand{\Ph}[1]{\Phi_{#1}}

\let\eps=\epsilon

\let\la=\lambda

\def\pmod#1{~({\rm mod}~#1)}

\newtheorem{thm}{Theorem}[section]
\newtheorem{lem}[thm]{Lemma}
\newtheorem{cor}[thm]{Corollary}
\newtheorem{prop}[thm]{Proposition}
\newtheorem{conj}[thm]{Conjecture}

\theoremstyle{definition}
\newtheorem{rem}[thm]{Remark}

\newtheorem{exmp}[thm]{Example}

\raggedbottom

\begin{document}

\title[Green functions for spin groups]{The 2-parameter Green functions\\ for 8-dimensional spin groups}
\date{\today}
\author{Gunter Malle}
\address{FB Mathematik, TU Kaiserslautern, Postfach 3049,
  67653 Kaisers\-lautern, Germany.}
\email{malle@mathematik.uni-kl.de}
\author{Emil Rotilio}
\address{FB Mathematik, TU Kaiserslautern, Postfach 3049,
  67653 Kaisers\-lautern, Germany.}
\email{rotilio@mathematik.uni-kl.de}

\thanks{The authors gratefully acknowledge financial support by SFB TRR 195.}

\keywords{2-parameter Green function, character formula, disconnected centre, 
  $\Spin_8^+(q)$}

\subjclass[2010]{20C15, 20C33, 20G40}

\begin{abstract}
The 2-parameter Green functions occur as a crucial ingredient in the character
formula for Lusztig induction in finite reductive groups. Still, very little is
known about these functions, in particular in the case of groups arising
from algebraic groups with disconnected centre. We collect some basic
properties and then apply these, together with some explicit computations, to
determine all 2-parameter Green functions for 8-dimensional spin groups in odd
characteristic, whose centre is disconnected of order~4.
\end{abstract}

\maketitle


\section{Introduction}
The 2-parameter Green functions are a central ingredient in the character
formula for Lusztig induction and restriction in finite groups of Lie type.
Despite their importance, little seems to be known about their values.
We collect some basic properties and compute these functions for several
families of groups of small rank, with connected and non-connected centre.

Our main result is the complete determination of the 2-parameter Green functions
for the family of spin groups $\Spin_8^+(q)$ for odd $q$, see
Theorem~\ref{thm:Spin8}, which we give in the form of tables at the end of
this paper.

Our results also lead us to formulate two conjectures on the values of
2-parameter Green functions, the first (Conjecture~\ref{conj:integral}) for
arbitrary finite reductive groups, the second (Conjecture~\ref{conj:type A})
particular for groups of type~$A$.

\section{2-parameter Green functions}

\subsection{Definition and first properties}
Let $\bG$ be connected reductive with a Steinberg map $F:\bG\to\bG$. For an
$F$-stable Levi subgroup $\bL$ of a parabolic subgroup $\bP$ of $\bG$ with Levi
decomposition $\bP=\bU\bL$,
$$\QLG\!:\!\bG_\uni^F\times\bL_\uni^F\rightarrow\QQ,\ (u,v)\mapsto
  \frac{1}{|\bL^F|}\sum_{i\ge0}(-1)^i\Trace\big((u,v)\!\mid
  \text{H}_c^i(\cL^{-1}(\bU))\big),\!$$
is called the associated \emph{2-parameter Green function}.
Here $\cL:\bG\to\bG$, $g \mapsto g^{-1}F(g)$, denotes the Lang map,
$\bG_\uni$ is the set of unipotent elements of $\bG$, and $\text{H}_c^i$ is
$\ell$-adic cohomology with compact support, for a prime $\ell$ different
from the characteristic of~$\bG$.

By an abuse of notation we omit the parabolic subgroup $\bP$ from our
notation. This is justified as by the Mackey formula for Lusztig induction,
the 2-parameter Green function is independent of $\bP$ in all situations
that we consider here. It follows in particular that $\QLG(u,v_1)=\QLG(u,v_2)$
whenever $v_1,v_2\in\bL^F$ are conjugate in $N_\bG(\bL)^F$. 
Not much else seems to be known about the $\QLG$.

The relevance of 2-parameter Green functions comes from the fact that they occur
in the character formula for Lusztig induction (see \cite[Thm.~2.2]{DiMi0} and
\cite[Prop.~3.2]{DiMi1}):

\begin{thm}   \label{thm:charform}
 Let $\bL$ be an $F$-stable Levi subgroup of $\bG$. Then
 $$\RLG(\psi)(g)=
   \frac{1}{|\bL^F|\,|C_\bG^\circ(s)^F|}\sum_{\{h\in\bG^F\mid s^h\in\bL\}}
    \!\!\!|C_{\tw{h}\bL}^\circ(s)^F|\!\!\sum_{v\in C_{\tw{h}\bL}^\circ(s)_\uni^F}
    \!\!\!Q_{C_{^h\!\bL}^\circ(s)}^{C_\bG^\circ(s)}(u,v^{-1})\,{}^h\!\psi(sv),
 $$
 for $\psi\in\Irr(\bL^F)$ and $g\in\bG^F$ with Jordan decomposition $g=su$.
\end{thm}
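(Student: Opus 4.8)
The plan is to unwind the cohomological definition of $\RLG$ into a weighted sum of Lefschetz numbers and then localise everything to the unipotent data inside connected centralisers of~$s$. Writing $\mathcal S(g,v):=\sum_{i\ge0}(-1)^i\Trace\big((g,v)\mid H_c^i(\cL^{-1}(\bU))\big)$ for the virtual trace of the pair $(g,v)\in\bG^F\times\bL^F$ acting on $\cL^{-1}(\bU)$ by $x\mapsto gxv^{-1}$, extracting the $\psi$-isotypic part of the cohomology gives
\[
  \RLG(\psi)(g)=\frac{1}{|\bL^F|}\sum_{v\in\bL^F}\mathcal S(g,v)\,\psi(v^{-1}).
\]
The whole problem is thus to evaluate $\mathcal S(su,v)$ and to reorganise the sum over~$v$.

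First I would reduce to unipotent elements via the standard fixed-point principle for the semisimple part: if $(s,t)$ denotes the (finite, $\ell'$-order) semisimple part of $(su,v)$ in $\bG^F\times\bL^F$, then $\mathcal S(su,v)=\sum_{i\ge0}(-1)^i\Trace\big((u,v_u)\mid H_c^i(X^{(s,t)})\big)$, where $v_u$ is the unipotent part of $v$, $X=\cL^{-1}(\bU)$, and
\[
  X^{(s,t)}=\{x\in\cL^{-1}(\bU)\mid sxt^{-1}=x\}=\{x\in\cL^{-1}(\bU)\mid x^{-1}sx=t\}.
\]
Hence only those $v$ whose semisimple part $t$ is $\bG^F$-conjugate to $s$ contribute, and I would regroup the sum over $v\in\bL^F$ as a sum over such semisimple $t\in\bL^F$ and, for each, over the unipotent elements $v_u\in C_\bL(t)^F$.

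The key step is the geometric identification of $X^{(s,t)}$. If $t=x^{-1}sx$, then choosing $h\in\bG^F$ with $t=s^h$ and setting $y=xh^{-1}$ turns the defining condition into $y\in C_\bG(s)$; the components of $C_\bG(s)$ can be absorbed into the choice of $h$, so that $x$ runs over $\{yh\mid y\in C_\bG^\circ(s)\}$ as $h$ ranges over representatives of $\{h\in\bG^F\mid s^h\in\bL\}$. Rewriting the Lang condition $\cL(x)=x^{-1}F(x)\in\bU$ in terms of $y$ then shows that $X^{(s,t)}$ is isomorphic, equivariantly for the residual left--right action of $u$ and $v_u$, to the Lang preimage $\cL_C^{-1}(\bU_C)$ computed inside $C:=C_\bG^\circ(s)$, where $\bU_C$ is the unipotent radical of the parabolic of $C$ having $C_{\tw{h}\bL}^\circ(s)$ as Levi complement. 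Here one uses that, since $\tw{h}\bL$ is a Levi subgroup of a parabolic of $\bG$ containing~$s$, its connected centraliser $C_{\tw{h}\bL}^\circ(s)$ is an $F$-stable Levi subgroup of a parabolic of $C_\bG^\circ(s)$. By the definition of the 2-parameter Green function recalled above, the resulting Lefschetz number equals $|C_{\tw{h}\bL}^\circ(s)^F|\,Q_{C_{\tw{h}\bL}^\circ(s)}^{C_\bG^\circ(s)}(u,v_u^{-1})$, the factor accounting for the normalisation $1/|C_{\tw{h}\bL}^\circ(s)^F|$ built into $Q$.

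It then remains to collect the combinatorial data: parametrising the contributing $t$ by $h\in\{h\in\bG^F\mid s^h\in\bL\}$, counting the overlap through the transitive $C_\bG^\circ(s)^F$-action (which supplies the factor $1/|C_\bG^\circ(s)^F|$) and transporting $\psi(v^{-1})$ through conjugation by $h$ turns $\psi$ evaluated at $t v_u$ into $\tw{h}\psi(sv_u)$; renaming $v_u$ as $v$ and assembling the constants $1/|\bL^F|$, $1/|C_\bG^\circ(s)^F|$ and $|C_{\tw{h}\bL}^\circ(s)^F|$ reproduces the asserted formula, well-definedness of the right-hand side being guaranteed by the independence of $\bP$ noted above. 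The main obstacle is exactly this geometric identification of $X^{(s,t)}$ with a Lang preimage inside $C_\bG^\circ(s)$: because the centre of $\bG$ may be disconnected, $C_\bG(s)$ need not be connected, so one must work consistently with the connected centralisers $C_\bG^\circ(s)$ and $C_{\tw{h}\bL}^\circ(s)$ and verify that the component group $C_\bG(s)/C_\bG^\circ(s)$ is correctly absorbed into the $h$-sum and into the normalisation by $|C_\bG^\circ(s)^F|$ rather than $|C_\bG(s)^F|$. This disconnectedness is precisely the delicate feature emphasised in the introduction.
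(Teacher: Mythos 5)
The paper does not actually prove Theorem~\ref{thm:charform}: it is imported verbatim from Digne--Michel (\cite[Thm.~2.2]{DiMi0}, \cite[Prop.~3.2]{DiMi1}), so there is no in-paper proof to compare against. Your outline does follow the strategy of the cited proof (unwind the definition into Lefschetz numbers, apply the Deligne--Lusztig fixed-point theorem to the semisimple part of the pair, identify the fixed points with Lang preimages inside $C_\bG^\circ(s)$, then collect constants). But as a proof it has a genuine gap exactly at the decisive step, which you yourself label ``the main obstacle'' and then do not resolve; deferring that verification is not an option, because it \emph{is} the content of the theorem.

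Concretely, two of your assertions are unproved and nontrivial: (i) that only those $v$ whose semisimple part $t$ is $\bG^F$-conjugate (not merely $\bG$-conjugate) to $s$ contribute, and (ii) that ``the components of $C_\bG(s)$ can be absorbed into the choice of $h$''. Both rest on one key fact that your sketch never isolates: $\tw{h}\bU\cap C_\bG(s)$ consists of unipotent elements, and every unipotent element of $C_\bG(s)$ lies in $C_\bG^\circ(s)$, since the component group $C_\bG(s)/C_\bG^\circ(s)$ has order prime to the defining characteristic. Decompose $X^{(s,t)}$ into pieces indexed by the cosets $C_\bG^\circ(s)y$ inside $\{x\in\bG\mid s^x=t\}$; on such a piece the Lang condition becomes $\cL(c)\in(\tw{y}\bU)\,b^{-1}$ with $b=F(y)y^{-1}\in C_\bG(s)$, while $\cL(c)\in C_\bG^\circ(s)$ automatically. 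The key fact forces $(\tw{y}\bU)b^{-1}\cap C_\bG^\circ(s)=\emptyset$ unless $b\in C_\bG^\circ(s)$, i.e.\ unless the coset is $F$-stable; and exactly the $F$-stable cosets contain rational representatives $h$, whose rational points form a single $C_\bG^\circ(s)^F$-orbit (Lang's theorem in the connected group $C_\bG^\circ(s)$). This vanishing statement is what kills the non-$F$-stable components and what converts the geometric sum into the sum over $h\in\bG^F$ with $s^h\in\bL$ weighted by $1/|C_\bG^\circ(s)^F|$; a naive ``twisted Lang'' argument, without the key fact, would wrongly suggest that every component contributes a nonempty piece. Note also that $X^{(s,t)}$ is in general a disjoint union of \emph{several} Lang preimages, one for each $C_\bG^\circ(s)^F$-class of rational $h$ with $s^h=t$, not a single one as your phrasing claims. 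Finally, a small but real slip: the Deligne--Lusztig fixed-point theorem requires splitting the finite-order automorphism into its $p'$-part and $p$-part, $p$ the defining characteristic; ``$\ell'$-order'' is not the relevant condition.
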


If $\bL=\bT$ is a maximal torus, so $\bP$ is a Borel subgroup of $\bG$, then
$\bL_\uni^F=\{1\}$ and the defining formula shows that
$Q_\bT^\bG(u,1)=R_{\bT}^{\bG}(1)(u)$ $(u\in\bG_\uni^F)$, which is
the usual (1-parameter) Green function.
The values of $\QLG(u,v)$ at $u=1$ are known for any $\bL$, see
\cite[p.~157]{DiMi2}:
\begin{equation}   \label{eq1}
  \QLG(1,v)=\begin{cases}
  \eps_\bG\eps_\bL|\bG^F:\bL^F|_{p'}& \text{if $v=1$,}\\ 0& \text{otherwise.}
  \end{cases}
\end{equation}

If $\bL$ is a split Levi subgroup of $\bG$, that is, if we can choose $\bP$ and
hence its unipotent radical $\bU$ to be $F$-stable, then
\begin{equation}   \label{eq2}
  \QLG(u,v)=\frac{1}{|\bL^F|}|\{g\bU^F\mid g\in\bG^F,\,u^g\in v\bU^F\}|
\end{equation}
can be computed in an elementary way. This shows:

\begin{prop}   \label{prop:split case}
 Assume that $\bP$ is an $F$-stable parabolic subgroup of $\bG$ with Levi
 decomposition $\bP=\bU.\bL$. Then for $v\in\bL_\uni^F$, $u\in\bG_\uni^F$ we
 have:
 \begin{enumerate}
  \item[\rm(a)] $|v^{\bL^F}|\,\QLG(u,v)\in\ZZ_{\ge0}$.
  \item[\rm(b)] If $\QLG(u,v^{-1})\ne0$ then
   $v^\bG\subseteq\overline{u^\bG}\subseteq\overline{\Ind_\bL^\bG(v^\bL)}$.
  \item[\rm(c)] If $u$ is regular unipotent, then there is a unique
   $\bL^F$-class
   $C$ of regular unipotent elements of $\bL^F$ such that
   $$\QLG(u,v)=\begin{cases}
         |v^{\bL^F}|^{-1}& \text{if $v\in C$,}\\ 0& \text{otherwise}.
    \end{cases}$$
 \end{enumerate}
\end{prop}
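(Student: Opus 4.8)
The plan is to derive all three parts from the elementary description \eqref{eq2}, which is available precisely because $\bP$ is $F$-stable. Writing $X_v:=\{\,g\bU^F\mid g\in\bG^F,\ u^g\in v\bU^F\,\}$ we have $\QLG(u,v)=|X_v|/|\bL^F|$, so the whole proposition becomes a counting problem for the sets $X_v$. For (a) the idea is to let $C_{\bL^F}(v)$ act on $X_v$: since $\bL$ normalises $\bU$, the rule $g\bU^F\mapsto g\ell^{-1}\bU^F$ is well defined for every $\ell\in\bL^F$, and a one-line computation using $u^{g\ell^{-1}}=\tw{\ell}(u^g)$ shows it carries $X_v$ bijectively onto $X_{\tw{\ell}v}$ (in particular $\QLG(u,\cdot)$ is constant on $\bL^F$-classes) and, when $\ell\in C_{\bL^F}(v)$, maps $X_v$ to itself. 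The stabiliser of any coset is $\bU^F\cap\bL^F=\{1\}$, so this action is free; hence $|C_{\bL^F}(v)|$ divides $|X_v|$, and $|v^{\bL^F}|\,\QLG(u,v)=|X_v|/|C_{\bL^F}(v)|$ lies in $\ZZ_{\ge0}$.

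For (b), assume $\QLG(u,v^{-1})\ne0$, so some conjugate $u^g$ lies in $v^{-1}\bU$. Let $\la$ be a cocharacter with $\bL=C_\bG(\la)$ and $\bU=\{x\mid\lim_{t\to0}\tw{\la(t)}x=1\}$. Writing $u^g=v^{-1}b$ with $b\in\bU$ and using that $\la(t)$ centralises $v^{-1}\in\bL$, I get $\lim_{t\to0}\tw{\la(t)}(u^g)=v^{-1}$, so $v^{-1}\in\overline{u^\bG}$; as unipotent classes are stable under inversion this gives $v^\bG\subseteq\overline{u^\bG}$. For the second inclusion I would quote the Lusztig--Spaltenstein description of induction: $\bG\cdot\big((v^{-1})^\bL\,\bU\big)$ is irreducible with dense orbit $\Ind_\bL^\bG((v^{-1})^\bL)$, and $u^\bG$ meets this set, whence $\overline{u^\bG}\subseteq\overline{\Ind_\bL^\bG((v^{-1})^\bL)}=\overline{\Ind_\bL^\bG(v^\bL)}$, again by inversion-stability.

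Finally, for (c) let $u$ be regular unipotent. Part (b) shows that $\QLG(u,v)\ne0$ forces $\overline{u^\bG}\subseteq\overline{\Ind_\bL^\bG(v^\bL)}$; since $\overline{u^\bG}$ is the whole unipotent variety this requires $\Ind_\bL^\bG(v^\bL)$ to be regular, i.e.\ $v$ to be regular in $\bL$. The \emph{decisive} step is a global count: the cosets $v\bU^F$, $v\in\bL_\uni^F$, partition the unipotent set $\bP_\uni^F$, so summing \eqref{eq2} over $v$ counts the cosets $g\bU^F$ with $u^g\in\bP^F$; since each fibre of $\bG^F/\bU^F\to\bG^F/\bP^F$ has $|\bL^F|$ elements, this yields $\sum_{v\in\bL_\uni^F}\QLG(u,v)=|\{\,g\bP^F\mid u\in\tw{g}\bP\,\}|$. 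A regular unipotent lies in a unique Borel subgroup, hence in a unique parabolic of the type of $\bP$, which is automatically $F$-stable (as $u$ is $F$-fixed and $\bP$ is $F$-stable); by Lang's theorem this corresponds to a single coset, so the sum equals $1$. As $\QLG(u,\cdot)$ is constant on $\bL^F$-classes and each $|v^{\bL^F}|\,\QLG(u,v)$ is a non-negative integer by (a), a sum of such integers equal to $1$ forces exactly one class $C$ to contribute; by the above $C$ is regular, $\QLG(u,v)=|v^{\bL^F}|^{-1}$ for $v\in C$, and $\QLG(u,v)=0$ otherwise. The only genuinely delicate point is this counting identity — (a) is bookkeeping and (b) is an assembly of standard closure and cocharacter facts, whereas pinning the total sum to $1$ via the unique $u$-fixed parabolic is what makes the dichotomy work.
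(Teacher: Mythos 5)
Your proof is correct. Parts (a) and (b) are essentially the paper's own arguments: for (a) the paper uses the same action of $C_\bL(v)^F$ on the cosets $g\bU^F$ (you additionally note its freeness, which is what the divisibility really needs), and for (b) the paper likewise combines density of $\Ind_\bL^\bG(v^\bL)$ in the saturation of $v^\bL\bU$ with a contraction of $\bU$ to $1$ --- phrased there via orbit closures under $Z^\circ(\bL)$ rather than your cocharacter limit $\lim_{t\to0}\tw{\la(t)}x=1$; these are the same mechanism, since $\bP$ is the attracting parabolic of such a $\la$. Part (c) is where you genuinely diverge. The paper argues locally: since the regular element $u$ lies in a unique Borel, every $g\in\bG^F$ with $u^g\in\bP$ already lies in $\bP^F$ (conjugate parabolics sharing a Borel coincide, and $N_\bG(\bP)=\bP$), and a direct count inside $\bP^F$ then yields exactly $|C_\bL(v)^F|$ cosets $g\bU^F$ with $u^g\in v\bU^F$, for $v$ the image of $u$ modulo $\bU^F$; this identifies the class $C$ concretely. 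You instead establish the mass formula $\sum_{v\in\bL_\uni^F}\QLG(u,v)=|\{g\bP^F\mid u\in\tw{g}\bP\}|$, show the right-hand side equals $1$ for regular $u$ (unique Borel, $F$-stability by uniqueness, Lang--Steinberg to produce a rational conjugator), and then let integrality (a) plus constancy on $\bL^F$-classes force the dichotomy. Both routes hinge on the unique-Borel property, but yours buys a by-product valid for \emph{every} unipotent $u$ --- the sum of $\QLG(u,\cdot)$ over $\bL_\uni^F$ counts the $\bG^F$-conjugates of $\bP$ containing $u$ --- at the cost of invoking the full strength of (a), while the paper's count is shorter and pins down $C$ explicitly. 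Two small points to tighten: your step ``$\Ind_\bL^\bG(v^\bL)$ regular, i.e.\ $v$ regular'' is exactly the dimension preservation of Lusztig--Spaltenstein induction, so cite \cite[Thm.~1.3(a)]{LS79} as the paper does; and the passage between $v$ and $v^{-1}$ (the hypothesis in (b) involves $\QLG(u,v^{-1})$) is something the paper glosses over entirely, so your explicit appeal to inversion-stability of unipotent classes is the more careful treatment --- and in (c) it is harmless in any case, since the regular class, being the unique dense one, is automatically inversion-stable.
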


Here, $\overline{\bC}$ denotes the closure of a class $\bC$ and
$\Ind_\bL^\bG\bC$ is the induced class in the sense of Lusztig--Spaltenstein
\cite{LS79}.

\begin{proof}
Observe that if $g\in\bG^F$ is such that $u^g\in v\bU^F$ then for any
$c\in C_\bL(v)^F$, the element $gc$ has the same property, so
$|\{g\bU^F\mid u^g\in v\bU^F\}|$ is divisible by $|C_\bL(v)^F|$, whence
(\ref{eq2}) shows that $|v^{\bL^F}|\QLG(u,v)$ is an integer. 
\par
If $\QLG(u,v^{-1})\ne0$ there is $g\in\bG^F$ with $u^g\in v\bU^F$, so up
to replacing $u$ by a conjugate, we have $u\in v\bU^F$. Now by definition the
induced class $\bC:=\Ind_\bL^\bG(v^\bL)$ has the property that
$\bC\cap v^\bL\bU$ is dense in $v^\bL\bU$. Hence, $u\in\overline{\bC}$.
Moreover, we have $u=v x$ for some $x\in\bU^F$. Then
$X:=\{vx^c\mid c\in Z^\circ(\bL)\}\subseteq u^\bG$.
Now $Z^\circ(\bL)$ acts non-trivially on all root subgroups of $\bU$ as
$\bL=C_\bG(Z^\circ(\bL))$. Thus the closure of $X$ contains $v$ and so
$v\in\overline{u^\bG}$.
\par
For~(c) note that the centraliser dimension of $v$ in $\bL$ and of
$v'\in\Ind_\bL^\bG(v^\bL)$ in $\bG$ agree (see \cite[Thm.~1.3(a)]{LS79}).
It follows that only the regular unipotent class of $\bL$ induces to the regular
unipotent class of $\bG$, and thus $\QLG(u,v)=0$ unless $v$ is regular. Now
assume that $u\in v\bU^F$, and so $\QLG(u,v)\ne0$. Since $u$ is regular, it lies
in a unique Borel subgroup $\bB\le\bP$ of $\bG$. Thus, if $g\in\bG^F$ with
$g^{-1}ug\in\bP$ then $g\in\bP^F$. In particular, $g^{-1}ug\in v'\bU^F$ for some
$v'\in\bL_\uni^F$ implies that $v,v'$ are $\bL^F$-conjugate. It is clear that
there are exactly $|C_\bL(v)^F|$ many cosets $g\bU^F$ with $g^{-1}ug\in v\bU^F$.
\end{proof}

All of our examples indicate that the previous result continues to hold in the
general case (see also Corollary~\ref{cor:reg unip} and the tables in
Section~\ref{sec:Spin8}):

\begin{conj}   \label{conj:integral}
 The conclusions of Proposition~\ref{prop:split case} continue to hold for
 arbitrary $F$-stable Levi subgroups of $\bG$.
\end{conj}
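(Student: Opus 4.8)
The plan is to treat the three assertions separately and, since the counting formula~(\ref{eq2}) is unavailable once $\bU$ is no longer $F$-stable, to replace it by two substitutes: the character formula of Theorem~\ref{thm:charform}, which ties $\QLG$ to the better-understood Lusztig induction $\RLG$, and the geometry of $\cL^{-1}(\bU)$ over $\overline{\FF}_q$. For the integrality half of part~(a) I would specialise Theorem~\ref{thm:charform} to a unipotent element $g=u$: then $s=1$, $C_\bG^\circ(s)=\bG$, and the inner sum runs over all $v\in\bL_\uni^F$, so that grouping $v$ into $\bL^F$-classes makes the products $|v^{\bL^F}|\,\QLG(u,v^{-1})$ appear naturally as the coefficients. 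Running $\psi$ over the (rational) class functions supported on single unipotent classes and using the integrality properties of $\RLG$ on such functions, one solves for the individual values; establishing that the solution actually lands in $\ZZ$ rather than merely in $\QQ$ is part of what must be checked.

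Assertion~(b) is geometric and essentially $F$-independent, so I expect the split-case argument to transfer almost verbatim. The Lusztig--Spaltenstein class $\Ind_\bL^\bG(v^\bL)$ is by definition the dense class in the image of $\bG\times_\bP(v^\bL\bU)\to\bG$, a closure relation that lives over $\overline{\FF}_q$ and does not see $F$. Nonvanishing of $\QLG(u,v^{-1})$ means that the pair $(u,v)$ contributes to $H_c^*(\cL^{-1}(\bU))$, which should force the geometric incidence $u\in\overline{v^\bL\bU}$ and hence $u\in\overline{\Ind_\bL^\bG(v^\bL)}$; the reverse containment $v\in\overline{u^\bG}$ comes, as in the proof of Proposition~\ref{prop:split case}, from the action of $Z^\circ(\bL)$ on the root subgroups of $\bU$ together with $\bL=C_\bG(Z^\circ(\bL))$. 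The only point needing care is translating ``contributes to cohomology'' into ``lies in the support'', which I expect to follow from the standard support properties of the trace on $\ell$-adic cohomology.

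Part~(c) I would reduce to~(b). The equality of centraliser dimensions under Lusztig--Spaltenstein induction (\cite[Thm.~1.3(a)]{LS79}, again $F$-independent) shows that only the regular class of $\bL$ induces to the regular class of $\bG$, so~(b) already forces $v$ to be regular and $\QLG(u,v)=0$ otherwise. It then remains to pin down the single nonzero value as $|v^{\bL^F}|^{-1}$: the uniqueness of the Borel subgroup containing a given regular unipotent element (a geometric fact) should localise the relevant fixed-point set, but extracting the precise constant from $H_c^*(\cL^{-1}(\bU))$, rather than from a coset count as in the split case, is the delicate step.

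The main obstacle is the \emph{nonnegativity} in part~(a). Once $\bU$ is not $F$-stable there is no manifestly nonnegative count behind $\QLG(u,v)$: the value is an alternating sum of traces of $(u,v)$ on the groups $H_c^i(\cL^{-1}(\bU))$, and a priori nothing rules out cancellation across degrees. Proving $|v^{\bL^F}|\,\QLG(u,v)\ge0$ would seem to require either a parity or purity statement forcing the relevant cohomology into a single degree (for instance from a cell decomposition of $\cL^{-1}(\bU)$), or a representation-theoretic interpretation of the value as a genuine multiplicity. I therefore expect the integrality in~(a), all of~(b), and the vanishing part of~(c) to be accessible by the strategy above, with nonnegativity remaining the true gap---which is precisely why the statement is recorded as a conjecture and here confirmed only by explicit computation, in particular for $\Spin_8^+(q)$.
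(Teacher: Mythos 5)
The statement you are working on is Conjecture~\ref{conj:integral}: the paper contains no proof of it. Its only support there is the split case (Proposition~\ref{prop:split case}), Corollary~\ref{cor:reg unip}, and the explicit case-by-case computations culminating in the tables of Section~\ref{sec:Spin8}. Your proposal, to its credit, does not really claim to prove it either: you single out the nonnegativity in part~(a) as an open gap, and in that respect your assessment agrees with the paper's decision to record the statement as a conjecture verified only computationally.

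However, the parts you describe as ``accessible'' also contain steps that would fail, not just technical points to be checked. In~(a), a class function supported on a single unipotent class of $\bL^F$ is in general only a \emph{rational} combination of irreducible characters, not a virtual character, so no integrality property of Lusztig induction applies to it; worse, by the character formula its image under $\RLG$ evaluated at a unipotent element $u$ is exactly $|v^{\bL^F}|\,\QLG(u,v^{-1})$, the quantity you are trying to control, so the argument is circular. In~(b), the bridge from ``nonvanishing trace'' to ``geometric incidence'' cannot be repaired: the pair $(u,v)$ is a $p$-element acting on $\cL^{-1}(\bU)$ in characteristic $p$, where Lefschetz-type fixed-point theorems are unavailable. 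Concretely, a fixed point of $(u,v)$ acting by $x\mapsto uxv$ satisfies $x^{-1}ux=v^{-1}$; taking $\bL=\bT$ and $v=1$, the pair $(u,1)$ has no fixed points whenever $u\ne1$, yet $Q_\bT^\bG(u,1)=\RTG(1)(u)$ is nonzero for many unipotent $u$ (it equals $1$ for $u$ regular unipotent). So the trace can be nonzero with empty fixed-point set, and any proof of~(b) in the non-split case must proceed by entirely different means --- which is one reason the paper falls back on explicit computation. Part~(c) you leave open at the decisive point (extracting the constant $|v^{\bL^F}|^{-1}$ from the cohomology). In short, your sketch identifies the right difficulties, but none of the three assertions is actually established by it; this is consistent with, and indeed explains, the statement's status in the paper as a conjecture rather than a theorem.
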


Recall that a conjugacy class of $\bG^F$ is called \textit{uniform} if its characteristic
function is a linear combination of Deligne--Lusztig characters. Digne and
Michel \cite[Cor.~3.2]{DiMi0} (see also \cite[Cor.~4.4]{DiMi1}) have shown how
to compute the values of 2-parameter Green functions on uniform unipotent
conjugacy classes:

\begin{thm}   \label{thm:DM}
 Assume that either the class of $u\in\bG^F$ or of $v\in\bL^F$ is uniform. Then
 $$\QLG(u,v)=\frac{1}{|\bL^F|}\sum_{\bT\le\bL/\sim}
   \frac{|\bT^F|^2}{|N_\bL(\bT)^F|}\RTG(1)(u)\,\RTL(1)(v),$$
 where the sum runs over $\bL^F$-classes of $F$-stable maximal tori of $\bL$.
\end{thm}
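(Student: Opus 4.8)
The plan is to reduce $\QLG(u,v)$ to the one-variable Green functions $\RTG(1)$ and $\RTL(1)$ by combining the character formula of Theorem~\ref{thm:charform} with the orthogonal projection onto uniform class functions. Throughout I use two elementary facts: that $\QLG$ is a class function in each of its arguments (it is an alternating trace on $\cL^{-1}(\bU)$, on which $\bG^F\times\bL^F$ acts by left and right translation), and that Green functions are inversion-invariant, $\RTL(1)(w)=\RTL(1)(w^{-1})$, since they are rational-valued and $\overline{\RTL(1)(w)}=\RTL(1)(w^{-1})$. The starting point is the specialisation of Theorem~\ref{thm:charform} to a unipotent argument: taking $g=u\in\bG_\uni^F$ forces $s=1$ and $C_\bG^\circ(s)=\bG$, every $h\in\bG^F$ satisfies $s^h\in\bL$, and after using the $\bG^F$-invariance of $\QLG$ in its first variable to carry out the sum over $h$ one finds
\begin{equation}\label{eq:star}
 \RLG(\psi)(u)=\sum_{w\in\bL_\uni^F}\QLG(u,w^{-1})\,\psi(w)
 \qquad(u\in\bG_\uni^F),
\end{equation}
valid for any class function $\psi$ on $\bL^F$, only the unipotent values of which enter.

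Suppose first that $v$ is uniform. Fix $u$ and let $\tilde q_u$ denote the class function on $\bL^F$ that equals $w\mapsto\QLG(u,w)$ on $\bL_\uni^F$ and vanishes elsewhere. Since $v$ is uniform, $\delta_v$ is a uniform class function, so $\langle\tilde q_u-p_{\uni}(\tilde q_u),\delta_v\rangle=0$ and hence $\QLG(u,v)=\tilde q_u(v)=p_{\uni}(\tilde q_u)(v)$: at a uniform class the function is visible only through its uniform projection. It therefore suffices to compute that projection, which with $W_\bL(\bT)=N_\bL(\bT)/\bT$ equals
$$p_{\uni}(\tilde q_u)=\sum_{\bT\le\bL/\sim}\frac{1}{|W_\bL(\bT)^F|}
  \sum_{\theta\in\Irr(\bT^F)}\langle\tilde q_u,\RTL(\theta)\rangle_{\bL^F}\,\RTL(\theta).$$
As $\tilde q_u$ is supported on unipotent elements, where every $\RTL(\theta)$ agrees with $\RTL(1)$, the inner product $\langle\tilde q_u,\RTL(\theta)\rangle$ does not depend on $\theta$; evaluating it by \eqref{eq:star} with $\psi=\RTL(1)$, transitivity $\RLG(\RTL(1))=\RTG(1)$, and the inversion-invariance of $\RTL(1)$ gives $\langle\tilde q_u,\RTL(\theta)\rangle=|\bL^F|^{-1}\RTG(1)(u)$. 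Evaluating the double sum at the unipotent element $v$ and using $\sum_{\theta}\RTL(\theta)(v)=|\bT^F|\,\RTL(1)(v)$ together with $|W_\bL(\bT)^F|=|N_\bL(\bT)^F|/|\bT^F|$ collapses it to exactly the asserted right-hand side.

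When instead $u$ is uniform, the same strategy is carried out in $\bG^F$, projecting the class function $u'\mapsto\QLG(u',v)$ onto the uniform functions of $\bG^F$ and evaluating at the uniform class of $u$. The one-variable input is now the companion character formula for Lusztig restriction ${}^{*}\RLG$, the adjoint of Theorem~\ref{thm:charform}, which is governed by the very same 2-parameter Green function and whose specialisation to a unipotent argument reads ${}^{*}\RLG(\chi)(v)=|\bG^F|^{-1}\sum_{u'\in\bG_\uni^F}\QLG(u',v^{-1})\chi(u')$. Feeding $\chi=\RTG(1)$ into this identity, together with adjunction and transitivity of induction to evaluate ${}^{*}\RLG(\RTG(1))$ on the uniform unipotent classes of $\bL^F$, produces the inner products needed for the uniform projection in $\bG^F$, and the same bookkeeping returns the stated formula.

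I expect the genuine difficulty to lie in the normalising constants rather than in the structure. One must check that the sum over $h\in\bG^F$ in Theorem~\ref{thm:charform} contributes no stray factor in \eqref{eq:star}, that the uniform-projection formula is normalised so that the $\theta$-sum yields precisely $|\bT^F|$, and hence that the constant $|\bT^F|^2/|N_\bL(\bT)^F|$ and the global factor $|\bL^F|^{-1}$ emerge exactly rather than up to a spurious power of $|\bT^F|$ or $|\bL^F|$. For the case that $u$ is uniform there is the additional task of importing the restriction form of the character formula (absent from the statements above) and of computing ${}^{*}\RLG(\RTG(1))$, which is where the Mackey formula for Lusztig functors enters. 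The conceptual content, that $\QLG$ is detectable at a uniform class only through its uniform projection, which \eqref{eq:star} and transitivity pin down in terms of one-variable Green functions, is otherwise routine once these constants are tracked.
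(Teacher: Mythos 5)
The paper does not prove Theorem~\ref{thm:DM} at all: it is quoted from Digne--Michel \cite[Cor.~3.2]{DiMi0} (see also \cite[Cor.~4.4]{DiMi1}), so your attempt can only be measured against that source, whose method --- uniform projection combined with the character and Mackey formulas --- is essentially the one you adopt. Your first half, where the class of $v$ is assumed uniform, is correct and complete: extension by zero gives a class function $\tilde q_u$ on $\bL^F$, a class function is detected at a uniform class by its uniform projection alone, your projector formula is correctly normalised, the computation $\langle\tilde q_u,\RTL(\theta)\rangle=|\bL^F|^{-1}\RTG(1)(u)$ (independent of $\theta$) follows from the unipotent character formula, transitivity and inversion-invariance, and collapsing the $\theta$-sum produces exactly the coefficient $|\bT^F|^2/|N_\bL(\bT)^F|$.

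The half where only $u$ is uniform, however, has genuine gaps. First, your restriction formula is mis-normalised: testing the adjunction $\langle\tw{*}\RLG\chi,\psi\rangle_{\bL^F}=\langle\chi,\RLG\psi\rangle_{\bG^F}$ against characteristic functions of unipotent classes gives $\tw{*}\RLG(\chi)(v)=\frac{|\bL^F|}{|\bG^F|}\sum_{u'\in\bG_\uni^F}\QLG(u',v^{-1})\chi(u')$, not $|\bG^F|^{-1}\sum_{u'}\QLG(u',v^{-1})\chi(u')$; already for $\bL=\bG$ your version returns $\chi(v)/|\bG^F|$ instead of $\chi(v)$, and carried through it makes the final formula too large by the factor $|\bL^F|$. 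Second, and more seriously, the inner products needed for the projection in $\bG^F$ are, for each $F$-stable maximal torus $\bT'\le\bG$, the values $\frac{1}{|\bL^F|}\tw{*}\RLG\big(R_{\bT'}^\bG(1)\big)(v^{-1})$ at the \emph{given} class of $v$, which in this half of the theorem is not assumed uniform. Adjunction and transitivity --- what you actually invoke, speaking of evaluating ``on the uniform unipotent classes of $\bL^F$'' --- determine only the uniform projection of $\tw{*}\RLG\big(R_{\bT'}^\bG(1)\big)$, and a uniform projection cannot be evaluated at a non-uniform class; a proof along those lines would only recover the case where $v$ is also uniform, i.e.\ nothing beyond your first half. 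What is required is the actual virtual character, i.e.\ the Deligne--Lusztig Mackey formula $\tw{*}\RLG\big(R_{\bT'}^\bG(1)\big)=\sum_x R_{{}^x\bT'}^\bL(1)$, the sum running over $\bL^F$-cosets of $\{x\in\bG^F\mid {}^x\bT'\le\bL\}$: counting, for a fixed $\bL^F$-class of tori $\bT\le\bL$, the cosets with ${}^x\bT'$ in that class gives $|N_\bG(\bT)^F|/|N_\bL(\bT)^F|$, and this factor, combined with $|\bT^F|/|W_\bG(\bT)^F|$ from the projector, is precisely what produces $|\bT^F|^2/|N_\bL(\bT)^F|$. You mention the Mackey formula only in passing at the very end; as written, the second case of the theorem is not established.
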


\subsection{A linear system}
We write $\tQLG$ for the matrix $\big(|v^{\bL^F}|\QLG(u,v^{-1})\big)_{v,u}$,
with rows and columns indexed by the unipotent conjugacy classes of $\bL^F$,
$\bG^F$ respectively.

On unipotent elements $u\in\bG_\uni^F$ the character formula from
Theorem~\ref{thm:charform} reads
$$\RLG(\psi)(u)=\sum_{v\in\bL_\uni^F}\!\QLG(u,v^{-1})\,\psi(v)$$
for $\psi\in\Irr(\bL^F)$.

\begin{lem}   \label{lem:prod}
 Let $\bM\le\bG$ be an $F$-stable Levi subgroup containing $\bL$. Then
 $$\tQLG=\tQ_\bL^\bM\cdot\tQ_\bM^\bG\,.$$
\end{lem}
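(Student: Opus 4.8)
The plan is to derive the identity from the transitivity of Lusztig induction, $R_\bL^\bG=R_\bM^\bG\circ R_\bL^\bM$, combined with the unipotent character formula displayed just before the lemma. Transitivity is available in our setting, since the Mackey formula holds here and already guarantees independence of the parabolic; I would take it as the essential input. (A purely geometric argument, fibering the variety $\cL^{-1}(\bU)$ over the analogous variety attached to $\bM$, should also work, but the character-theoretic route is shorter.)

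First I would rewrite each instance of the character formula as a sum over classes. Since $Q$ is constant on $\bL^F$-classes in its second variable and $\psi$ is a class function, for $u\in\bG_\uni^F$ one has $R_\bL^\bG(\psi)(u)=\sum_{[v]}|v^{\bL^F}|\,\QLG(u,v^{-1})\,\psi(v)$, and likewise $R_\bL^\bM(\psi)(w)=\sum_{[v]}|v^{\bL^F}|\,Q_\bL^\bM(w,v^{-1})\,\psi(v)$ for $w\in\bM_\uni^F$, the sums running over the unipotent $\bL^F$-classes $[v]$. Here I use that the formula of Theorem~\ref{thm:charform}, being linear in its argument, extends from $\Irr(\bL^F)$ to all class functions, so that it may also be applied to the virtual character $\phi:=R_\bL^\bM(\psi)$.

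Next I would compute $R_\bL^\bG(\psi)(u)=R_\bM^\bG(\phi)(u)$ by applying the formula for $R_\bM^\bG$ and then substituting that for $R_\bL^\bM$ at the unipotent classes of $\bM^F$. After interchanging the two summations this gives
$$R_\bL^\bG(\psi)(u)=\sum_{[v]}|v^{\bL^F}|\,\psi(v)\sum_{[w]}|w^{\bM^F}|\,Q_\bL^\bM(w,v^{-1})\,Q_\bM^\bG(u,w^{-1}),$$
where $[w]$ runs over the unipotent $\bM^F$-classes. The coefficient of $\psi(v)$ here, namely $|v^{\bL^F}|$ times the inner sum, is precisely the $(v,u)$-entry of $\tQ_\bL^\bM\cdot\tQ_\bM^\bG$, whereas in the direct formula of the previous paragraph the coefficient of $\psi(v)$ equals $|v^{\bL^F}|\,\QLG(u,v^{-1})$, the $(v,u)$-entry of $\tQLG$.

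Finally I would compare coefficients. The two expressions for $R_\bL^\bG(\psi)(u)$ agree for every $\psi\in\Irr(\bL^F)$, so the resulting relation among the coefficients is a linear relation among the columns of the character table of $\bL^F$ indexed by the unipotent classes; since the columns of a character table are linearly independent, the coefficients of $\psi(v)$ must coincide for each unipotent class $[v]$. As $u$ and $[v]$ were arbitrary, this yields the entrywise equality $\tQLG=\tQ_\bL^\bM\cdot\tQ_\bM^\bG$. The only genuine subtlety is the appeal to transitivity of Lusztig induction, which is what allows the two character formulas to be composed; once that is granted, the remainder is bookkeeping with class sizes followed by a linear-algebra comparison.
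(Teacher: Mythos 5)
Your proof is correct and follows essentially the same route as the paper: both rest on transitivity of Lusztig induction, $\RLG=R_\bM^\bG\circ R_\bL^\bM$, composed with the unipotent character formula, the only (cosmetic) difference being that the paper evaluates the formula directly on arbitrary class functions (e.g.\ class indicator functions), while you work with $\psi\in\Irr(\bL^F)$ and invoke linear independence of the columns of the character table to identify coefficients. Either way the bookkeeping with class sizes is the same, so your argument matches the paper's proof in all essentials.
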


\begin{proof}
By transitivity of Lusztig induction \cite[Prop.~9.1.8]{DiMi2} we have
$\RLG=R_\bM^\bG\circ R_\bL^\bM$, so
$$\RLG(\psi)(u)=\sum_{v\in\bM_\uni^F}\!Q_\bM^\bG(u,v^{-1})\,
  \sum_{x\in\bL_\uni^F}\!Q_\bL^\bM(v,x^{-1})\,\psi(x)$$
for all $u\in\bG_\uni^F$ and all class functions $\psi$ on $\bL^F$. The claim
follows.
\end{proof}

Thus, for an inductive determination of the 2-parameter Green functions it is
sufficient to consider the case when $\bL<\bG$ is maximal among $F$-stable Levi
subgroups. Now let $\bT\le\bL$ be an $F$-stable maximal torus, then
Lemma~\ref{lem:prod} gives $\tRTG=\tRTL\cdot\tQLG$, where we have set
$\tRTG:=\big(\RTG(1)(u)\big)_u$.
The $\bL^F$-conjugacy classes of $F$-stable maximal tori of $\bL$ are
parametrised by $F$-conjugacy classes in the Weyl group $W_L$ of $\bL$ (see
\cite[4.2.22]{DiMi2}).
Thus the above formula yields a linear system of equations
$$R_{\bT_w}^\bG(1)(u)
  =\sum_{v\in\bL_\uni^F}R_{\bT_w}^{\bL}(1)(v)\,\QLG(u,v^{-1})\qquad(w\in W_L)
  \eqno{(*)}$$
for $\tQLG$ with coefficient matrix $\big(R_{\bT_w}^{\bL}(1)(v)\big)_{w,v}$.

\begin{prop}   \label{prop:solve}
 Assume that the number of unipotent classes of $\bL^F$ equals $|\Irr(W_L)^F|$.
 Then the 2-parameter Green functions $\QLG$ are uniquely determined by the
 ordinary Green functions of $\bG$ and of $\bL$.
\end{prop}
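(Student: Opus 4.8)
The plan is to read the linear system $(*)$ as a single matrix equation and reduce the assertion to the invertibility of its coefficient matrix. Grouping the sum in $(*)$ by $\bL^F$-classes and absorbing each class size $|v^{\bL^F}|$ into the unknown matrix $\tQLG$, the system becomes $A\cdot\tQLG=\big(R_{\bT_w}^\bG(1)(u)\big)_{w,u}$, where $A:=\big(R_{\bT_w}^{\bL}(1)(v)\big)_{w,v}$ is the matrix of ordinary Green functions of $\bL$. Here every entry of $A$ is an ordinary Green function of $\bL$ and every entry of the right-hand side is an ordinary Green function of $\bG$, evaluated at the tori $\bT_w\le\bL\le\bG$. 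Hence, once $A$ is known to be invertible, $\tQLG=A^{-1}\big(R_{\bT_w}^\bG(1)(u)\big)_{w,u}$ is uniquely determined by the ordinary Green functions of $\bG$ and of $\bL$; since the $(v,u)$-entry of $\tQLG$ equals $|v^{\bL^F}|\,\QLG(u,v^{-1})$ and $v\mapsto v^{-1}$ permutes the unipotent classes, this pins down all values of $\QLG$.

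First I would check that $A$ is square. Its rows are indexed by the $\bL^F$-classes of $F$-stable maximal tori, equivalently by the $F$-conjugacy classes of $W_L$, and the number of these equals $|\Irr(W_L)^F|$ by the standard fact that a finite group equipped with an automorphism has as many $F$-conjugacy classes as $F$-stable irreducible characters. Its columns are indexed by the unipotent classes of $\bL^F$, whose number equals $|\Irr(W_L)^F|$ by hypothesis. So $A$ is a square matrix of size $N:=|\Irr(W_L)^F|$, and uniqueness of $\tQLG$ is equivalent to $A$ being nonsingular.

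The hard part will be the invertibility of $A$, that is, the linear independence of the ordinary Green functions $Q_{\bT_w}^\bL=R_{\bT_w}^\bL(1)|_{\bL_\uni^F}$. The route I would take is Lusztig's determination of the Green functions: there are linearly independent, unipotently supported class functions $\Gamma_\phi$ on $\bL^F$, indexed by $\phi\in\Irr(W_L)^F$, together with an expansion $R_{\bT_w}^\bL(1)(v)=\sum_{\phi}\tilde\phi(wF)\,\Gamma_\phi(v)$, where $\tilde\phi$ denotes the preferred extension of $\phi$ to $W_L\langle F\rangle$. This factors $A=P\cdot G$ with $P=\big(\tilde\phi(wF)\big)_{w,\phi}$ and $G=\big(\Gamma_\phi(v)\big)_{\phi,v}$, both of size $N$. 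The matrix $P$ is invertible because the restrictions of the $\tilde\phi$ to the coset $W_LF$ form a basis of the $F$-class functions on $W_L$ (orthogonality of twisted characters); the matrix $G$ is invertible because its $N$ linearly independent rows live in the $N$-dimensional space of unipotent class functions, hence form a basis. Consequently $A$ is invertible and the proof concludes as above.

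I expect the genuine content to sit in the full-rank claim for $G$ — equivalently, that Lusztig's Springer-type functions $\Gamma_\phi$ are linearly independent — which in the square case of the hypothesis is exactly the statement that these functions exhaust the unipotent class functions of $\bL^F$. I would either invoke Lusztig's algorithm for Green functions directly, or, as a shortcut, cite the known linear independence of the Green functions $Q_{\bT_w}^\bL$ and combine it with the dimension count from the hypothesis to obtain invertibility of $A$.
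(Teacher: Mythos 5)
Your proposal is correct, and its skeleton is the same as the paper's: rewrite the system $(*)$ as $\tRTG=A\cdot\tQLG$ with coefficient matrix $A=\bigl(R_{\bT_w}^{\bL}(1)(v)\bigr)_{w,v}$, observe that $A$ is square (rows indexed by $F$-conjugacy classes of $W_L$, whose number equals $|\Irr(W_L)^F|$, columns by unipotent classes of $\bL^F$, equal in number by hypothesis), and then prove $A$ invertible. The only divergence is in how invertibility is obtained. The paper does it in one line: the orthogonality relations for Green functions show that the Green functions attached to non-conjugate maximal tori of $\bL$ are pairwise orthogonal with nonzero norm as unipotently supported class functions, hence linearly independent, and squareness finishes the argument. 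Your primary route, factoring $A=PG$ through Lusztig's expansion $R_{\bT_w}^{\bL}(1)=\sum_\phi\tilde\phi(wF)\,\Gamma_\phi$, is valid but by itself buys nothing: since $P$ is invertible, the rank of $A$ equals the rank of $G$, so linear independence of the $\Gamma_\phi$ is \emph{equivalent} to linear independence of the Green functions, which is exactly the point at issue; turning this into a proof requires identifying the $\Gamma_\phi$ with the Springer-theoretic functions and invoking their triangularity with respect to the closure order (Lusztig--Shoji), considerably heavier machinery than needed. The ``shortcut'' you mention at the end --- linear independence of the $Q_{\bT_w}^{\bL}$ from the orthogonality relations combined with the dimension count from the hypothesis --- is precisely the paper's proof, and is the route you should lead with.
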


\begin{proof}
This follows from the above considerations and the fact that the square (by
assumption) matrix of values of Green functions on unipotent classes of $\bL^F$
is invertible due to the orthogonality relations for Green functions.
\end{proof}

The assumption of Proposition~\ref{prop:solve} is satisfied, for example, if
$\bL$ has connected centre and only components of type $A$, a case already
covered by Theorem~\ref{thm:DM}. Some examples will
be given in Section~\ref{subsec:GL_n}. For Levi subgroups of other types, the
above just gives restrictions on the values of the 2-parameter Green functions.
See also Section~\ref{sec:Spin8} below.

We then get the following values on regular unipotent elements of $\bG$:

\begin{cor}   \label{cor:reg unip}
 In the situation of Proposition~\ref{prop:solve}, if $u$ is regular unipotent
 in $\bG^F$ then
 $$\tQLG(u,v)=\begin{cases} 1& \text{for $v$ regular unipotent,}\\
   0& \text{otherwise.}\end{cases}$$
\end{cor}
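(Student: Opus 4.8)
The plan is to read off the result from the unique solvability of the linear system~$(*)$ that Proposition~\ref{prop:solve} provides. Fix a regular unipotent element $u\in\bG^F$. By that proposition the corresponding column $\big(\tQLG(u,v)\big)_v=\big(|v^{\bL^F}|\QLG(u,v^{-1})\big)_v$ is the \emph{unique} vector (indexed by the unipotent classes of $\bL^F$) solving
$$R_{\bT_w}^\bG(1)(u)=\sum_{v}R_{\bT_w}^{\bL}(1)(v)\,\tQLG(u,v)\qquad(w\in W_L).$$
Hence it suffices to exhibit one solution having the claimed shape and then invoke uniqueness.

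The key input I would use is the classical fact that a one-parameter Green function takes the value $1$ on every regular unipotent element: for any $F$-stable maximal torus $\bT$ of a connected reductive group, $R_\bT(1)$ evaluated at a regular unipotent element equals~$1$ (see, e.g., \cite{DiMi2}). Applied to $\bG$ this makes the left-hand side above equal to $1$ for every $w$; applied to $\bL$ it shows that the column of the coefficient matrix $\big(R_{\bT_w}^{\bL}(1)(v)\big)_{w,v}$ indexed by the regular unipotent class of $\bL^F$ is the all-ones vector. In the present situation the hypothesis of Proposition~\ref{prop:solve} precludes the splitting of unipotent classes that a disconnected centre would produce (e.g.\ already $\SL_2$ violates the equality $|\Irr(W_L)^F|=\#\{\text{unipotent classes}\}$), so $\bL^F$ has a single regular unipotent class. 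The indicator vector of that class is therefore a solution of the system, and by uniqueness it \emph{is} the solution. Translating back through $\tQLG(u,v)=|v^{\bL^F}|\QLG(u,v^{-1})$, and using that $v$ is regular if and only if $v^{-1}$ is, this is exactly the asserted statement.

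I expect the main obstacle to be pinning down these two structural ingredients rather than the linear algebra: first, the uniform value $1$ of Green functions on the regular unipotent class across all tori $\bT_w$; and second, the uniqueness of that class, which is what makes the all-ones coefficient column correspond to a single basis vector (without it one obtains only that the regular coordinates sum to $1$, in the spirit of Proposition~\ref{prop:split case}(c)). Both hold under the hypotheses at hand. An attractive alternative route, available whenever the regular unipotent classes of $\bG$ and $\bL$ are uniform, is to compute $\QLG(u,v)$ directly from Theorem~\ref{thm:DM}: specialising that theorem to $\bL=\bL$ together with the value-$1$ fact yields the mass formula $\sum_{\bT\le\bL/\sim}|\bT^F|^2/|N_\bL(\bT)^F|=|\bL^F|/|v^{\bL^F}|$ for regular $v$, giving $|v^{\bL^F}|\QLG(u,v)=1$ on the nose, while the orthogonality relations for Green functions force the value $0$ off the regular class.
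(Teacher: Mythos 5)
Your proposal is correct and takes essentially the same route as the paper: the paper's proof likewise cites the value-$1$ property of ordinary Green functions on regular unipotent elements, observes that the claimed values then solve the linear system~($*$), and (implicitly) invokes the uniqueness of the solution guaranteed by Proposition~\ref{prop:solve}. Your explicit remark that the hypothesis of that proposition forces $\bL^F$ to have a single regular unipotent class (otherwise all Green functions would agree on two distinct classes and could not span a space of dimension equal to the number of unipotent classes) is precisely what the paper's ``Clearly'' glosses over, and your alternative sketch via Theorem~\ref{thm:DM} is a valid variant under the additional uniformity assumption.
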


\begin{proof}
It is known that the ordinary Green functions take constant value~1 on regular
unipotent elements (see \cite[Lemma 12.4.8]{DiMi2}). Clearly, the claimed values
yield a solution to the system of equations~($*$) for $\tQLG$.
\end{proof}

The conclusion needs no longer be true when $Z(\bL)$ is disconnected, see e.g. 
Proposition~\ref{prop:A3}.

\subsection{Some examples in $\GL_n$}   \label{subsec:GL_n}
We compute some examples of 2-parameter Green functions for $\GL_n$. For this
recall that the conjugacy classes of unipotent elements of $\GL_n$ and of
$\GL_n(q)$ are parametrised by partitions of $n$ via their Jordan canonical
form. We write $u_\la$ for a unipotent element in $\GL_n(q)$ with Jordan form of
shape $\la\vdash n$.

In $\GL_n$, the order relation on unipotent classes is given by the dominance
order on partitions. Proposition~\ref{prop:split case} then gives for example
the following vanishing result:
let $\bL$ be of type $\GL_{n-1}$ in $\bG$ of type $\GL_n$. Let
$u\in\bG_\uni$ and $v\in\bL_\uni$ be parametrised by $\la\vdash n$,
$\mu\vdash n-1$ respectively. Then $\QLG(u,v)=0$ unless $\mu$ is obtained from
$\la$ by removing one node. Indeed, the induced class is labelled by the
partition obtained by increasing the first part by~1, whence the claim follows
from Proposition~\ref{prop:split case}(b).

\begin{exmp}   \label{exmp:GLn}
(a) Let $\bG=\GL_3$ with the standard Frobenius map. Via their Jordan normal
form, the unipotent conjugacy classes of $\bG$ are parametrised by partitions
of~3, which we order $1^3,21,3$. For $\bL$ the only non-trivial standard Levi
subgroup, of type $A_1$, we find
$$\tQLG=\begin{pmatrix} \Ph3& 1& .\cr  .& q& 1
  \end{pmatrix}$$
where the columns are labelled by the unipotent classes of $\bG$, and the rows
by those of~$\bL$. Here, as in subsequent tables, $\Phi_d$ denotes the $d$th
cyclotomic polynomial evaluated at~$q$, and ``." stands for ``0".   \par
(b) Next let $\bG=\GL_4$, with the unipotent classes ordered as
$1^4,21^2,2^2,31,4$. First let $\bL_1$ be a standard Levi subgroup of type
$A_2$. Its unipotent classes
are labelled by the partitions $1^3,21,3$ of $3$, and we obtain the matrix of
(modified) 2-parameter Green functions
$$\tQ_{\bL_1}^\bG=\begin{pmatrix}
  \Ph2\Ph4& 1& .& .& .\cr
  .& q\Ph2& \Ph2& 1& .\cr
  .& .& .& q& 1
\end{pmatrix}.$$
Next, let $\bL_2$ be the standard Levi subgroup of type $A_1^2$. The
resulting matrix is
$$\tQ_{\bL_2}^\bG=\begin{pmatrix}
  \Ph3\Ph4& \Ph2& 1& .& .\cr
  .& q^2& .& 1& .\cr
  .& q^2& .& 1& .\cr
  .& .& q\Ph2& \Ph1& 1
\end{pmatrix}$$
where the rows are labelled by the unipotent conjugacy classes of $\bL_2$
parametrised by the pairs $(1^2,1^2),(2,1^2),(1^2,2),(2,2)$ of partitions of~2.
Note that the second and third row agree, as the second and third unipotent
class of $\bL_2^F$ are conjugate in $N_\bG(\bL)^F$. For the twisted Levi
subgroup $\bL_3$ of type $A_1(q^2).(q^2-1)$ we find
$$\tQ_{\bL_3}^\bG=\begin{pmatrix}
  \Ph1^2\Ph3& -\Ph1& 1& .& .\cr
  .& .& q\Ph1& -\Ph1& 1
\end{pmatrix}.$$
\par
Finally, the split Levi subgroup of type $A_1$ is not maximal and we may apply
Lemma~\ref{lem:prod}, while for its twisted version $\bL_4$ of type
$A_1(q).(q^2-1)(q-1)$ we obtain
$$\tQ_{\bL_4}^\bG=\begin{pmatrix}
  -\Ph1\Ph3\Ph4& 1& -\Ph1& 1& .\cr
  .& -q^2\Ph1& q\Ph2& .& 1
\end{pmatrix}.$$
\par
(c) For $\bG=\GL_5$ and a standard Levi subgroup $\bL_1$ of type $A_3$ we obtain
$$\tQ_{\bL_1}^\bG=\begin{pmatrix}
  \Ph5& 1& .& .& .& .& .\cr
  .& q\Ph3& \Ph2& 1& .& .& .\cr
  .& .& q^2& .& 1& .& .\cr
  .& .& .& q\Ph2& q& 1& .\cr
  .& .& .& .& .& q& 1
\end{pmatrix},$$
while for a standard Levi subgroup $\bL_2$ of type $A_2A_1$ we find
$$\tQ_{\bL_2}^\bG=\begin{pmatrix}
  \Ph4\Ph5& \Ph3& 1& .& .& .& .\cr
  .& q^3& .& 1& .& .& .\cr
  .& q^2\Ph3& q\Ph2& \Ph2& 1& .& .\cr
  .& .& q^2\Ph2& \Ph1\Ph2& q& 1& .\cr
  .& .& .& q^2& .& 1& .\cr
  .& .& .& .& q^2& \Ph1&  1\cr
\end{pmatrix}$$
where the classes of $\bL$ are ordered as $(1^3,1^2),(1^3,2),(21,1^2),(21,2),
(3,1^2),(3,2)$.
Note that here two distinct unipotent classes of $\bL$ fuse into the same
$\bG$-class (corresponding to rows~2 and~3) but the Green functions differ: the
classes are not $N_\bG(\bL)^F$-conjugate.
\end{exmp}

The above examples seem to indicate the following conjecture:

\begin{conj}   \label{conj:type A}
 Let $\bG$ be of type $A$, $\bL\le\bG$ an $F$-stable Levi subgroup,
 $v\in\bL_\uni^F$ and $u$ is the induced class of $v^\bL$ in the sense of
 Lusztig--Spaltenstein. Then $\tQLG(u,v)=1$.
\end{conj}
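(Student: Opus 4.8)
The plan is to reduce the value $\tQLG(u,v)$, for $u=\Ind_\bL^\bG(v^\bL)$, to a count of rational points on the Lusztig--Spaltenstein fibre over $u$, and then to exploit the fact that in type~$A$ this induction is birational. First I would treat the case of a split Levi subgroup $\bL$, where formula~(\ref{eq2}) applies. As in the proof of Proposition~\ref{prop:split case}, the condition $u^g\in v'\bU^F$ depends only on the coset $g\bP^F$ and not on $g\bU^F$; summing over the $\bL^F$-class of $v$ then turns the defining count into $\tQLG(u,v)=|\{g\bP\in(\bG/\bP)^F\mid g^{-1}ug\in\bC_\bL\bU\}|$, where $\bC_\bL=v^\bL$ (note $\bC_\bL=\bC_\bL^{-1}$ in type~$A$). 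Geometrically this is the number of $F$-fixed points of the fibre $\mathcal V_u:=\{g\bP\mid g^{-1}ug\in\bC_\bL\bU\}$ of the projection $\pi\colon\mathcal E\to\bG$ from $\mathcal E=\{(x,g\bP)\mid g^{-1}xg\in\bC_\bL\bU\}$ over the point $u$. Since $\dim\mathcal E=\dim\bC_\bL+2\dim\bU=\dim\bC$ with $\bC:=\Ind_\bL^\bG(\bC_\bL)$, the map $\pi$ is dominant onto $\overline{\bC}$ with generically finite fibres, and by the density characterisation of the induced class the fibre over the open class $\bC$ is exactly $\mathcal V_u$.

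The heart of the argument is then the claim that in type~$A$ the fibre $\mathcal V_u$ is a single reduced point, i.e.\ that $\pi$ is birational onto $\overline{\bC}$. Translating through the identification of $\bG/\bP$ with partial flags, this says: a unipotent $u$ of Jordan type $\mu=\mu^{(1)}+\cdots+\mu^{(k)}$ (addition of partitions part-by-part) admits a \emph{unique} $u$-stable flag of the prescribed shape whose successive subquotients carry the Jordan types $\mu^{(i)}$. I would prove this by induction on the number $k$ of blocks, reducing to the two-step case, where one shows the required invariant subspace $W$ is forced by the conditions on the ranks $\operatorname{rk}((u-1)^j|_W)$ and $\operatorname{rk}((u-1)^j|_{V/W})$ together with the maximality of $\mu=\mu^{(1)}+\mu^{(2)}$; this is the known birationality of Lusztig--Spaltenstein induction for nilpotent orbits in type~$A$. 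As $\mathcal V_u$ is then a single geometric point and $F$ preserves the fibre over the $F$-stable element $u$, that point is $\FF_q$-rational, so $|\mathcal V_u^F|=1$ and $\tQLG(u,v)=1$ for split $\bL$.

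It remains to pass from split to arbitrary $F$-stable Levi subgroups, and this is the step I expect to be the main obstacle. One route is to combine Lemma~\ref{lem:prod} with transitivity of $\Ind$: writing $\tQLG=\tQ_\bL^\bM\,\tQ_\bM^\bG$ for a chain $\bL\le\bM\le\bG$ and setting $v'=\Ind_\bL^\bM(v^\bL)$, one wants the dominance estimates of Proposition~\ref{prop:split case}(b) to force the intermediate class to be $v'$, so that the single surviving summand is $\tQ_\bL^\bM(v',v)\,\tQ_\bM^\bG(u,v')=1$ by induction; the difficulty is that~(b) is only established in the split case, so its analogue for twisted Levi subgroups must be proved first. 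A second route, available at least for $\bG=\GL_n$, is to invoke Theorem~\ref{thm:DM}: all unipotent classes of $\GL_n(q)$ are uniform, so $\QLG(u,v)$ is expressed purely through ordinary Green functions of $\bG$ and of $\bL$, reducing the claim to a combinatorial identity for Green functions that is insensitive to the rational structure of~$\bL$.

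Either way, the genuine obstacle is that formula~(\ref{eq2}) is unavailable for non-split $\bU$: one must instead control the $\ell$-adic cohomology $\mathrm H_c^*(\cL^{-1}(\bU))$ near the induced class and show that the single-point contribution survives as the value~$1$ while the lower unipotent classes do not interfere. This cohomological enhancement of the point count is exactly the input still missing, and is presumably the reason the statement is only formulated as a conjecture.
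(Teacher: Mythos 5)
You are attempting to prove Conjecture~\ref{conj:type A}; note that the paper itself offers no proof of this statement, only the computational evidence of Example~\ref{exmp:GLn} and the later tables, so there is nothing to compare your argument against except the conjecture's scope. Within that scope, your split-case argument is essentially sound for $\bG=\GL_n$ and goes beyond what the paper establishes: the passage from~(\ref{eq2}) to the count $|\{g\bP\in(\bG/\bP)^F\mid u^g\in v^\bL\bU\}|$ is the same manipulation as in the proof of Proposition~\ref{prop:split case} (your phrase that ``$u^g\in v'\bU^F$ depends only on $g\bP^F$'' is literally false for fixed $v'$, but becomes correct after summing over the $\bL^F$-class, with the factor $|C_\bL(v)^F|$ cancelling as intended), and the identification of this count with the number of rational points of the Lusztig--Spaltenstein fibre, a single point by birationality of induction in type~$A$, does give the value~$1$. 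Two caveats: the birationality claim is the crux and you only sketch it (it is a known fact, but it must be proved or cited precisely); and your identification of the point count with a single value $\tQLG(u,v)$ uses that $v^\bL\cap\bL^F$ is one $\bL^F$-class, which holds in $\GL_n$ but fails for type-$A$ groups with disconnected centre (e.g.\ $\SL_4$, Proposition~\ref{prop:A3}). There your argument only yields $\sum_i\tQLG(u,v_i)=1$ over the classes $v_i$ into which $v^\bL$ splits; deciding which $v_i$ carries the~$1$ is exactly what forced the authors to invoke Gel'fand--Graev characters in that setting.

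The genuine gap is the one you name yourself: non-split Levi subgroups, which are squarely inside the conjecture (the twisted Levis $\bL_3$, $\bL_4$ of Example~\ref{exmp:GLn}(b), whose tables display the predicted $1$'s, are part of its evidence), and neither of your proposed routes closes it. Route one, collapsing $\tQLG=\tQ_\bL^\bM\cdot\tQ_\bM^\bG$ from Lemma~\ref{lem:prod} to the single intermediate class $\Ind_\bL^\bM(v^\bL)$, needs the dominance statement of Proposition~\ref{prop:split case}(b) for the twisted member of the chain $\bL\le\bM\le\bG$; that statement is precisely Conjecture~\ref{conj:integral}, which is also open, so the argument would rest one conjecture on another. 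Route two, via Theorem~\ref{thm:DM}, is a legitimate reduction for $\GL_n$ (there every unipotent class of $\bG^F$ and of any $F$-stable Levi is uniform, so the theorem covers arbitrary, including twisted, $\bL$), but the resulting identity among ordinary Green functions is left unproved, and this route cannot reach the disconnected-centre groups at all, since non-uniform classes are the whole obstruction there. So what you have is a correct proof for split Levi subgroups of $\GL_n$ together with a clearly identified open core; that core is presumably the reason the statement appears in the paper as a conjecture rather than a theorem.
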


For example, if $\bL=\GL_m.\GL_{n-m}<\bG=\GL_n$ and $v$ is parametrised by
$(\la_1,\la_2)$ then we should have $\tQLG(u,v)=1$ for $u$ parametrised by
$\la_1+\la_2$.

\subsection{Some examples in classical groups}
In this section we consider some further cases for $\bG$ with connected centre
and $F$ a Frobenius endomorphism with respect to an $\FF_q$-structure, where
$q$ is odd. In this situation, the assumptions of Proposition~\ref{prop:solve}
are satisfied for groups of adjoint types $B_2,B_3,C_3,D_4$, for example. 

\begin{exmp}   \label{exmp:class}
(a) Let $\bG=\SO_5$ and $\bL$ a Levi subgroup of type $A_1$ containing long root
subgroups. Then we find
$$\begin{pmatrix}
  \Ph2\Ph4& 1& 2& .& .\cr
  .& q\Ph2& \Ph1& \Ph2& 1
\end{pmatrix}\quad\text{and}\quad
\begin{pmatrix}
  -\Ph1\Ph4& 1& .& 2& .\cr
  .& q\Ph1& -\Ph1& -\Ph2& 1
\end{pmatrix}$$
for the split and twisted version, respectively, while for the short root $A_1$s
we get
$$\begin{pmatrix}
  \Ph2\Ph4& \Ph2& \Ph2& 1& .\cr
  .& .& 2q& .& 1
\end{pmatrix}\quad\text{and}\quad
\begin{pmatrix}
  -\Ph1\Ph4& -\Ph1& 1& 1& .\cr
  .& .& .& -2q& 1
\end{pmatrix}.$$
\par
(b) Let $\bG=\SO_7$ and $\bL$ be a split Levi subgroup of type $B_2$. Here,
$\bL$ has again as many classes as its Weyl group has irreducible characters,
so the system has a unique solution
$$\tQLG=\begin{pmatrix}
  \Ph2\Ph3\Ph6& \Ph2& 1& 1& .& .& .& .& .& .\cr
  .& q^2\Ph2& .& .& 1& 2& .& .& .& .\cr
  .& .& q\Ph2^2& .& q\Ph2& \hlf q\Ph1& \hlf q\Ph2& 1& .& .\cr
  .& .& .& q\Ph4& .& \hlf q\Ph1& \hlf q\Ph2& .& 1& .\cr
  .& .& .& .& .& .& .& 2q& .& 1
\end{pmatrix}.$$
\par
(c) Let $\bG$ be of adjoint type $C_3$ and $\bL$ split of type $C_2$. Then we
find
$$\tQLG=\begin{pmatrix}
  \Ph2\Ph3\Ph6& 1& 2& .& .& .& .& .& .& .\cr
  .& q\Ph2\Ph4& \Ph1& \Ph2& \Ph2& .& 1& .& .& .\cr
  .& .& q^2\Ph2& .& \hlf q\Ph2& \Ph2& .& 1& .& .\cr
  .& .& .& q^2\Ph2& \hlf q\Ph1& .& .& .& 1& .\cr
  .& .& .& .& .& .& q\Ph2& q& q& 1
\end{pmatrix}.$$
\par
(d) Let $\bG$ be of type $D_4$ with connected centre, $F$ a split Frobenius and
$\bL$ a split Levi subgroup of type $A_3$. Ordering the 13 unipotent classes of
$\bG^F$ by their Jordan normal forms
$$1^8,\ 2^21^4,\ 2^4+,\ 2^4-,\ 31^5,\ 32^21,\ 3^21^2\ (\text{two classes}),\ 
  4^2+,\ 4^2-,\ 51^3,\ 53,\ 71$$
we find
$$\left(\begin{array}{ccccccccccccc}
   \Ph2^2\Ph4\Ph6& \Ph2& .& .& 1& .& .& .& .& .& .& .& .\\
   .& q^2\Ph2^2& \Ph2\Ph4& \Ph2\Ph4& .& 1& 2& .& .& .& .& .& .\\
   .& .& .& .& q\Ph2\Ph4& q\Ph2& \Ph1& \Ph2& 1& .& .& .& .\\
   .& .& .& .& .& .& 2q^2 & .& .& \Ph2& \Ph2& 1& .\\
   .& .& .& .& .& .& .& .& q\Ph2& .& .& q& 1
\end{array}\right)$$
for one of the three possible embeddings, and a suitable permutation of the
columns $(3,4,5)(9,10,11)$ for the other two. The Green functions for a twisted
Levi subgroup of type $\tw2A_3(q).(q+1)$ are related to those of $A_3(q).(q-1)$
as follows: The ordinary Green functions of $\tw2A_3(q)$ are obtained from those
of $A_3(q)$ by replacing $q$ with $-q$ by Ennola duality \cite{Kaw85}. This also
entails a permutation of maximal tori in the linear system~($*$). Therefore,
the Green functions
of $\tw2A_3(q).(q+1)$ are obtained from those of $A_3(q).(q-1)$ by replacing
$q$ by $-q$ and swapping the classes with Jordan normal form $3^21^2$ (which
have centraliser orders $q^8(q\pm 1)^2$). \par
Next consider a split Levi subgroup $\bL$ of type $A_1^3$. Here, $\tQLG$ equals
{\scriptsize $$\left(\begin{array}{ccccccccccccc}
  \Ph2\Ph3\Ph4^2\Ph6& *& \Ph2\Ph4& \Ph2\Ph4& \Ph2\Ph4& \Ph2& 1& 1& .& .& .& .& .\cr
  .& q^4\Ph2& .& .& q^2\Ph4& .& 2q& .& .& .& 1& .& .\cr
  .& q^4\Ph2& .& q^2\Ph4& .& .& 2q& .& .& 1& .& .& .\cr
  .& q^4\Ph2& q^2\Ph4& .& .& .& 2q& .& 1& .& .& .& .\cr
  .& .& \!\!q^2\Ph2\Ph4& .& .& q^2& 2q\Ph1& .& \Ph2& .& .& 1& .\cr
  .& .& .& \!\!q^2\Ph2\Ph4& .& q^2& 2q\Ph1& .& .& \Ph2& .& 1& .\cr
  .& .& .& .& \!\!q^2\Ph2\Ph4& q^2& 2q\Ph1& .& .& .& \Ph2& 1& .\cr
  .& .& .& .& .& \!\!q^3\Ph2& q\Ph1^2& q\Ph2^2& \Ph1\Ph2& \Ph1\Ph2& \Ph1\Ph2& q\mn2& 1
\end{array}\right)$$}
where $*=q^4+3q^3+3q^2+ q+1$. The symmetry from triality,
cyclically permuting the classes 3,4,5 and 9,10,11 of $\bG^F$, as well as the
classes 2,3,4 and 5,6,7 of $\bL^F$, is clearly visible. Again, the Green
functions for a twisted Levi subgroup of type $A_1(q)^3.(q+1)$ are obtained
by replacing $q$ with $-q$ and interchanging the two classes with Jordan normal
form $3^21^2$. For a split Levi subgroup of type $A_1(q^2).(q^2+1)$ we find
{\scriptsize $$\left(\begin{array}{ccccccccccccc}
-\Phi_1^3\Phi_2^3\Phi_3\Phi_6 & \Phi_1^2\Phi_2^2 & \Phi_1^2\Phi_2^2 & \Phi_1^2\Phi_2^2 & -\Phi_1\Phi_2 & -\Phi_1\Phi_2 & -\Phi_1 & \Phi_2 & . & . & 1 & . & . \\
. & . & . & . & -q^2\Phi_1^2\Phi_2^2 & q^2\Phi_1\Phi_2 & q\Phi_1^2 & -q\Phi_2^2 & -\Phi_1\Phi_2 & -\Phi_1\Phi_2 & . & 1 & 1
\end{array}\right)$$}
\par
(e) Let $\bG$ be of type $D_4$ with twisted Frobenius such that
$\bG^F=\tw2D_4(q)$ and $\bL$ an $F$-stable Levi subgroup of twisted type
$\tw2A_3(q).(q-1)$. Then we get
$$\tQLG=\begin{pmatrix}
  \Ph3\Ph8& \Ph2& 1& .& .& .& .& .& .\cr
  .& q^2\Ph4& .& 1& .& 2& .& .& .\cr
  .& .& q\Ph2\Ph4& q\Ph2& \Ph2& \Ph1& 1& .& .\cr
  .& .& .& .& 2q^2& .& .& 1& .\cr
  .& .& .& .& .& .& q\Ph2& q& 1
\end{pmatrix}.$$
\par
(f) Let $\bG$ be of type $D_4$ with triality Frobenius and $\bL$ an $F$-stable
Levi subgroup of type $A_1(q^3).(q-1)$. Then we obtain
$$\tQLG=\begin{pmatrix}
  \Ph2\Ph3\Ph6\Ph{12}& q^4\pl q\pl1& \Ph2& 1& 1& .& .\cr
  .& .& q^3\Ph2& q\Ph3& q\Ph6& \Ph2& 1
\end{pmatrix},$$
while for a Levi subgroup of type $A_1(q).(q^3-1)$ one has
$$\tQLG=\begin{pmatrix}
  \Ph2\Ph3\Ph6^2\Ph{12}& \Ph2\Ph6& q^3\pl q^2\pl1& \Ph3& \Ph6& 1& .\cr
  .& q^4\Ph2\Ph6& q^2\Ph1\Ph2& q(q^2\mn q\mn1)& q\Ph6& .& 1
\end{pmatrix}.$$
\end{exmp}

\section{The Green functions for $\Spin_8^+(q)$}   \label{sec:Spin8}
In this section we determine all 2-parameter Green functions from maximal
$F$-stable Levi subgroups of the simply connected groups of type~$D_4$ with a
split Frobenius, that is, for the groups $\Spin_8^+(q)$ with $q$ an odd prime
power. For this, we first study the relation to the Green functions for groups
with connected centre.

\subsection{Groups with non-connected centre}
For groups $\bG$ with non-connected centre, the number of unipotent classes is
bigger than the number of irreducible characters of the Weyl group, so that
Proposition~\ref{prop:solve} cannot be applied.
Still, additional considerations sometimes lead to a solution.
\par
For this, let $\bG\hookrightarrow\tbG$ be an $F$-equivariant regular embedding,
so that $\tbG=\bG Z(\tbG)$ has connected centre and derived subgroup
$[\tbG,\tbG]=\bG$. Then for any Levi subgroup $\bL\le\bG$, $\tbL:=\bL Z(\tbG)$
is a Levi subgroup of $\tbG$, $F$-stable if $\bL$ is. As in our case, when
$\bG$ is of type $D_4$, all proper Levi subgroups of $\bG$, and hence of $\tbG$,
are of type $A$, all 2-parameter Green functions for $\tbG$ can be computed
with Proposition~\ref{prop:solve}. Then we just need to descend to the simply
connected group $\bG$.
\par
Let us write $\cL_\bG$ for the Lang map on $\bG$. Now while $\cL_\bG^{-1}(\bU)$
is not necessarily invariant under the action of $\tbG\times\tbL$, it is so
under the diagonally embedded subgroup $\Delta(\tbL^F)\cong\tbL^F$ of
$\tbG^F\times\tbL^F$ (with $\Delta(l)=(l,l^{-1})$ for $l\in \tbL^F$).
So $\Delta(\tbL^F)$ acts on the $\ell$-adic cohomology
groups $\text{H}_c^i(\cL_\bG^{-1}(\bU))$. In particular, the 2-parameter Green
functions $\QLG$ are invariant under the diagonal action of $\tbL^F$:
$$\QLG(u,v)=\QLG(u^l,v^l)\qquad\text{for all }l\in\tbL^F.$$
Furthermore, the 2-parameter Green functions $Q_\tbL^\tbG$ are induced
from those of $\bL$ inside $\bG$ (see \cite[Prop.~2.2.1(b)]{Bo00}).
This implies relations between the 2-parameter Green functions of $\tbG$
and $\bG$. Suppose that the unipotent class of $u\in \tbG^F$ splits into $n$
classes of $\bG^F$ and that the unipotent class of $v\in \tbL^F $ splits into
$m$ classes of $\bL^F$, then by \cite[Prop.~2.2.1(b)]{Bo00}
\begin{equation}   \label{eq3}
\tQ_\tbL^\tbG(u,v) = \sum_{i=1}^m \tQLG(u,v_i)= \frac{m}{n}\sum_{i=1}^n \tQLG(u_i,v)
\end{equation}
where $u_i\in \bG^F$ are the representatives of the $\bG^F$-classes in
$\left[u\right]_{\tbG^F}$, and $v_i\in \bL^F$ are the representatives of the
$\bL^F$-classes in $\left[u\right]_{\tbL^F}$.

In the examples that we consider below, five cases appear:
\begin{itemize}
\item If $n=1$ then (\ref{eq3}) and the $\tilde{L}^F$-invariance directly gives
 $$\tQLG(u,v_i)=\frac{1}{m}\tQ_\tbL^\tbG(u,v)\qquad\text{for } i=1,\ldots,m.$$
\item If $m=1$ then (\ref{eq3}) and the $\tilde{L}^F$-invariance directly gives
 $$\tQLG(u_i,v)=\tQ_\tbL^\tbG(u,v)\qquad\text{for }i=1,\ldots,n. $$
\item If $n=m=2$ then the value $Q:=\tQ_\tbL^\tbG(u,v)$ is replaced in the
 matrix of $\tQLG$ by the submatrix
$$\begin{array}{c|cc}
 & u_1 & u_2 \\ \hline
v_1 & a & Q-a \\
v_2 & Q-a & a 
\end{array}$$
 where $a:=\tQLG(u_1,v_1)$.
\item If $n=4$, $m=2$ then the value $Q:=\tQ_\tbL^\tbG(u,v)$ is replaced in the
 matrix of $\tQLG$ by the submatrix
$$\begin{array}{c|cccc}
 & u_1 & u_2 & u_3 & u_4 \\ \hline
v_1 & a_1 & a_2 & a_3 & 2Q-a_1-a_2-a_3 \\
v_2 & Q-a_1 & Q-a_2 & Q-a_3 & a_1+a_2+a_3-Q 
\end{array}$$
 where $a_i:=\tQLG(u_i,v_1)$ for $i=1,2,3$.
\item If $n=2$, $m=4$ then the value $Q:=\tQ_\tbL^\tbG(u,v)$ is replaced in the
 matrix of $\tQLG$ by the submatrix
$$\begin{array}{c|cc}
 & u_1 & u_2 \\ \hline
v_1 & a_1 & Q/2-a_1 \\
v_2 & a_2 & Q/2-a_2 \\
v_3 & a_3 & Q/2-a_3 \\
v_4 & Q-a_1-a_2-a_3 & a_1+a_2+a_3-Q/2 
\end{array}$$
 where $a_i:=\tQLG(u_1,v_i)$ for $i=1,2,3$.
\end{itemize}

So, every pair of splitting classes introduces some unknown entries in $\tQLG$.
The number of unknowns can be reduced thanks to the diagonal action of $\tbL^F$.

We get the explicit action of $\tbL^F$ by following the construction of
Bonnaf\'e in \cite{Bo00} around Proposition 2.2.2. There, he translates the
action of an element $l\in\tbL^F/\bL^F$ (on the unipotent elements of $\bG^F$
and $\bL^F$) to the action (by conjugation) of a representative $l_z\in\bL$
for $z \in H^1(F,Z(\bL))$ such that $l_z^{-1}F(l_z)=z$. Since there is a
canonical surjection $H^1(F,\bZ) \twoheadrightarrow H^1(F,Z(\bL))$ we
choose, instead, for all $z\in H^1(F,\bZ)$ an element $g\in\bG$ such that
$g^{-1}F(g)=z$, where $\bZ=Z(\bG)$. This gives us the desired action for all
split Levi subgroups. For the non-split ones we replace $F$ by the associated
twisted Frobenius map $F'$. So for all $z \in H^1(F,\bZ)$ we have
$g,g'\in\bG$ such that $g^{-1}F(g)=z=g'^{-1}F'(g')$. Then the action is given
by $(u,v)\mapsto (u^g,v^{g'})$ for $u \in \bG^F$ and $v\in \bL^{F'}$.

We will see how to find the remaining unknowns in the following examples.

Recall that a class function $f$ of $\bG^F$ is \emph{cuspidal} if for every
$F$-stable proper Levi subgroup $\bL$ of $\bG$ we have $\tw{*}\RLG f=0$.

\begin{rem}
For any Levi subgroup $\bL$ of $\bG$ minimal with respect to having disconnected
centre a class function $f$ of $\bL^F$ that takes 1, -1 on a splitting
unipotent class (say with
representatives $u_1$ and $u_2$) and zero elsewhere is cuspidal.
By the discussion above and minimality, for any proper Levi subgroup $\bL'$ of
$\bL$ we have that $\tQ_{\bL'}^\bL(u_1,v)=\tQ_{\bL'}^\bL(u_2,v)$ for all
$v \in {\bL'}^F_\uni$. Then $\tw{*}R_{\bL'}^\bL(f)=0$.
\end{rem}

\subsection{The Green functions for $\SL_4(q)$ and $\SU_4(q)$}   \label{subsec:SL4}
It will be convenient (in view of Lemma~\ref{lem:prod}) to first compute the
Green functions for Levi subgroups of types $A_3$ and~$\tw2A_3$.

\begin{prop}   \label{prop:A3}
 Let $q$ be an odd prime power, $\bG=\SL_4$ when $q\equiv3\pmod4$, or
 $\bG=\SL_4/\langle\pm1\rangle$ when $q\equiv1\pmod4$, and $F$ a split Frobenius
 with respect to an $\FF_q$-structure on $\bG$. Then
 $$\tQ_{\bL_1}^\bG=\begin{pmatrix}
  \Ph3\Ph4& \Ph2& 1& 1& .& .& .\cr
  .& q^2& .& .& 1& .& .\cr
  .& q^2& .& .& 1& .& .\cr
  .& .& q\Ph2& .& \frac{1}{2}\Ph1& 1& .\cr
  .& .& .& q\Ph2& \frac{1}{2}\Ph1& .& 1\cr
 \end{pmatrix}$$
 for a split Levi subgroup $\bL_1$ with $\bL_1^F=A_1(q)^2.(q-1)$, and
 $$\tQ_{\bL_2}^\bG=\begin{pmatrix}
  \Ph1^2\Ph3& -\Ph1& 1& 1& .& .& .\cr
  .& .& q\Ph1& .& -\frac{1}{2}\Ph1& .& 1\cr
  .& .& .& q\Ph1& -\frac{1}{2}\Ph1& 1& .
 \end{pmatrix}$$
 for a non-split Levi subgroup $\bL_2$ with $\bL_2^F=A_1(q^2).(q+1)$.
\end{prop}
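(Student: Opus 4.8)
The plan is to compute the 2-parameter Green functions for $\Spin_8^+(q)$ by the regular-embedding descent method set up in the preceding subsection. The key idea is that the simply connected group $\bG$ (of type $D_4$) has disconnected centre, so Proposition~\ref{prop:solve} does not apply directly; but its Levi subgroups of types $A_3$ and $\tw2A_3$ are what we need by Lemma~\ref{lem:prod}, and we handle these via $\SL_4$ and $\SU_4$. First I would pass to a regular embedding $\bG\hookrightarrow\tbG$, where $\tbG$ has connected centre; then $\tbG$ and all its Levi subgroups are of type $A$, so all the $\tQ_\tbL^\tbG$ are uniquely determined by Proposition~\ref{prop:solve} (indeed already by Theorem~\ref{thm:DM}, since the relevant Levis have connected centre and only type-$A$ components). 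These ``upstairs'' matrices are exactly the type-$A$ computations already carried out in Example~\ref{exmp:GLn}, suitably restricted to the simply connected setting.

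\textbf{Descent to the simply connected group.} The heart of the argument is formula~(\ref{eq3}) together with the $\tbL^F$-invariance $\QLG(u,v)=\QLG(u^l,v^l)$. For $\bG=\SL_4$ (resp.\ its quotient by $\langle\pm1\rangle$), the centre is disconnected of order~$4$ (resp.\ order~$2$), and one checks which unipotent classes of $\tbG^F$ split when passing to $\bG^F$, and likewise for $\bL_1,\bL_2$. For the split Levi $\bL_1$ with $\bL_1^F=A_1(q)^2.(q-1)$, two unipotent classes of the ambient $\tbL$ each split into two $\bL_1^F$-classes (this is visible in the proposed matrix as the equal rows~2,3 and the split rows~4,5), so the $n=m=2$ and related splitting patterns from the bulleted cases apply. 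Each such splitting introduces one unknown entry $a$, and (\ref{eq3}) fixes the column/row sums, reducing the problem to determining finitely many scalars $a$.

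\textbf{Pinning down the unknowns.} The remaining unknowns $a$ are determined by the explicit diagonal action of $\tbL^F/\bL^F$, computed via Bonnaf\'e's translation (around Proposition~2.2.2 of \cite{Bo00}) into conjugation by representatives $g\in\bG$ with $g^{-1}F(g)=z$ for $z\in H^1(F,\bZ)$. The $\tbL^F$-invariance forces the two fused classes to receive equal Green-function values where the action swaps them, which immediately yields the symmetric $\begin{smallmatrix}a&Q-a\\Q-a&a\end{smallmatrix}$ shape and, in the present low-rank situation, pins $a$ to the specific value appearing (e.g.\ the entries $\tfrac12\Ph1$, where the halving reflects exactly that a class of size~$Q$ is distributed evenly between two $\bG^F$-classes related by the diagonal action). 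For the nonsplit $\bL_2$ with $\bL_2^F=A_1(q^2).(q+1)$, one proceeds identically, with $F$ replaced by the twisted Frobenius $F'$ and the upstairs matrix obtained by Ennola duality ($q\mapsto -q$) from the split case, as in Example~\ref{exmp:GLn}(b); this explains the sign changes and the appearance of $-\tfrac12\Ph1$.

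\textbf{Main obstacle.} The genuinely delicate step is the explicit determination of the diagonal $\tbL^F$-action on the individual split $\bG^F$- and $\bL^F$-classes: one must track how conjugation by the representatives $g,g'$ permutes the splitting unipotent classes, and verify that the resulting constraints, combined with~(\ref{eq3}), leave no residual freedom and force precisely the stated half-integer entries. This requires choosing explicit class representatives in $\SL_4$ and $\SU_4$ and computing the cocycle action concretely; it is where the ``explicit computations'' alluded to in the abstract enter, and where the disconnectedness of the centre genuinely obstructs a purely formal solution.
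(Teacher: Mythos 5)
Your general setup (regular embedding, formula~(\ref{eq3}), diagonal $\tbL^F$-invariance) matches the paper's, but the two steps that actually determine the unknown entries are missing, and the mechanism you propose in their place does not work. The diagonal action of $\tbL^F$ acts \emph{simultaneously} on both arguments, $(u,v)\mapsto(u^l,v^l)$; so on a pair of doubly-split classes ($n=m=2$, e.g.\ the pairs $(2^2,(2,2))$ and $(4,(2,2))$ here) it only forces the symmetric shape $\left(\begin{smallmatrix}a&Q-a\cr Q-a&a\end{smallmatrix}\right)$ and gives no information whatsoever about $a$: it cannot distinguish the true answer $a=q\Ph2$, $Q-a=0$ from, say, $a=Q/2$ in both entries. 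The halving you cite ($\tfrac12\Ph1$) occurs only in the $n=1$, $m=2$ entries, where it follows from~(\ref{eq3}) alone; it is not evidence that invariance pins down the doubly-split entries. For the split Levi $\bL_1$ the paper closes this gap with a different tool that you never invoke: since $\bL_1$ is split, $Q_{\bL_1}^\bG$ is the explicitly computable permutation-character value~(\ref{eq2}), and a direct count shows that $Q_{\bL_1}^\bG(u,v^{-1})=0$ when $u,v$ both have two Jordan blocks but are not $\bG^F$-conjugate, and likewise when $v$ has type $(2,2)$ and $u$ is regular.

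For the non-split Levi $\bL_2$ your ``proceed identically, with Ennola duality'' fails for the same reason: Ennola duality only produces the upstairs (connected-centre) data, leaving exactly the same undetermined scalars, and there is no counting formula~(\ref{eq2}) available in the non-split case. The paper's actual argument here is the essential new idea: take the \emph{cuspidal} class function $\psi$ on $\bL_2^F$ with values $\pm1$ on the two split regular classes, use the Mackey formula to get the norm equation $\langle R_{\bL_2}^\bG(\psi),R_{\bL_2}^\bG(\psi)\rangle=N\in\{1,2\}$ (where $N$ encodes whether $v_2,v_3$ fuse in $\bG^F$) together with the orthogonality $\langle R_{\bL_1}^\bG(\xi),R_{\bL_2}^\bG(\psi)\rangle=0$ --- note this second equation requires the already-solved split case as input --- and then solve the resulting system $(2a_1-q\Ph1)^2+q^2\Ph1\Ph2(2a_2-1)^2=Nq^3\Ph1$, $a_1=q\Ph1(1-a_2)$ over $\QQ$. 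Rationality of the solutions forces $N=2$ (so the split classes do not fuse) and yields exactly the two solutions $(a_1,a_2)\in\{(q\Ph1,0),(0,1)\}$, which differ only by the labelling of $v_2,v_3$; the ambiguity is afterwards resolved via Gel'fand--Graev characters. None of this Mackey/cuspidality machinery, the fusion question, or the dependence of the non-split case on the split one appears in your proposal, so as written it cannot produce the stated matrices.
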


\begin{proof}
In both cases, $G=\bG^F$ has centre of order~2 and seven unipotent classes with
representatives $u_1,\ldots,u_7$, since the classes of $\bG$ with Jordan normal
forms $2^2$ and $4$ both split into two classes in the finite group. Moreover,
as $\bL_1$ is split of type $A_1^2$, the Levi subgroup $\bL_1^F$ has five
unipotent classes with representatives $v_1,\ldots,v_5$ of Jordan types
$(1^2,1^2),(1^2,2),(2,1^2)$ and two of type $(2,2)$. Now by the known Green
functions for $\tbL_1$ in $\tbG$ (see Example~\ref{exmp:GLn}(b)) and our
considerations above, we only need to worry about $u$ and $v$ having two blocks
of size 2 or $u$ being regular unipotent. From the explicit realisation in the
split case as value of a permutation character (see~(\ref{eq2})) one computes
that $Q_{\bL_1}^\bG(u,v^{-1})=0$ for $u,v$ with two Jordan blocks, but not
conjugate in $\bG^F$. Also, $Q_{\bL_1}^\bG(u,v^{-1})=0$ for $v$ with two Jordan
blocks, and $u$ one out of the two regular unipotent classes. So, with respect
to a suitable ordering of the splitting classes, we find the stated result. 
\par
Now assume that $\bL_2$ is of non-split type $A_1^2$, so
$\bL_2^F=A_1(q^2).(q+1)$. The class of regular unipotent elements of $\bL_2$
splits into two $\bL_2^F$-classes, with representatives $v_2,v_3$ say. Again,
by the known Green functions for $\tbL_2$ in $\tbG$ (see
Example~\ref{exmp:GLn}(b)) and our considerations above there are two unknown
values in $\tQ_{\bL_2}^\bG$, denoted $a_1:=\tQ_{\bL_2}^\bG(u_3,v_2)$ and
$a_2:=\tQ_{\bL_2}^\bG(u_6,v_2)$.
\par
Consider the cuspidal class function $\psi$ on $\bL_2^F$ that takes values 1 and
$-1$ on $v_2$, $v_3$ respectively, and zero everywhere else. By the Mackey formula
this means, since $|N_\bG(\bL_2)^F:\bL_2^F|=2$, that
$\langle R_{\bL_2}^\bG(\psi),R_{\bL_2}^\bG(\psi)\rangle
=N\langle\psi,\psi\rangle$, with $N$ being equal 1 or 2 (depending on whether
$v_2$ and $v_3$ fuse in $\bG^F$). On the other hand, the norm of
$R_{\bL_2}^\bG(\psi)$ can be computed from the character formula using the
(unknown) values of $\tQ_{\bL_2}^\bG(u,v)$ on the pairs of splitting classes.
Then the norm equation above gives that
$$(2a_1-q\Ph1)^2+q^2\Ph1\Ph2(2a_2-1)^2=Nq^3\Ph1.$$
Next, let $\xi$ be the class function on $\bL_1^F$ that takes values 1
resp.~$-1$ on the two regular unipotent classes, respectively. Since $\psi$ is
cuspidal, the Mackey formula shows that
$\langle R_{\bL_1}^\bG(\xi),R_{\bL_2}^\bG(\psi)\rangle=0$. Using the known
values of $\tQ_{\bL_1}^\bG$, this translates to $a_1=q\Ph1(1-a_2)$. This system
of equations has rational solutions only for $N=2$ (meaning that the classes of
$v_2$ and $v_3$ don't fuse in $\bG^F$). The solutions of this system are
$(a_1,a_2)\in\left\lbrace (q\Ph1,0),\, (0,1) \right\rbrace $. 
Both correspond to the a matrix as in the statement, but in one case the
second and third lines are interchanged. This was to be expected, since we did
not choose explicit representatives for the classes of $v_2$ and $v_3$.
\end{proof}

To fix the table for a given choice of representative, we can proceed as
follows. Both solutions verify Conjecture~5.2 of \cite{DLM92} which in turn
proves Conjecture~5.2' of the same article in our case. This states that the
Lusztig restriction of Gel'fand--Graev characters of $\bG^F$ are Gel'fand--Graev
characters of $\bL_2^F$ (up to a possible sign, easily chosen by imposing the
positivity of the degree). By explicit computation of the Gel'fand--Graev
characters this eliminates one of the solutions and thus leads to a unique
result.

This result makes it easy to prove the following:

\begin{prop}   \label{prop:SL4,q=1(4)}
 Let $\bG=\SL_4$ and $\bG^F=\SL_4(q)$ with $q\equiv1\pmod4$. Then
 $$\tQ_{\bL_1}^\bG=\begin{pmatrix}
   \Ph3\Ph4& \Ph2& 1& 1& .& .& .& .& .\cr
   .& q^2& .& .& 1& .& .& .& .\cr
   .& q^2& .& .& 1& .& .& .& .\cr
   .& .& q\Ph2& .& \frac{1}{2}\Ph1& 1& .& 1& .\cr
   .& .& .& q\Ph2& \frac{1}{2}\Ph1& .& 1& .& 1
 \end{pmatrix}$$
 for a split Levi subgroup $\bL_1$ of type $A_1^2$, and
 $$\tQ_{\bL_2}^\bG=\begin{pmatrix}
   \Ph1^2\Ph3& -\Ph1& 1& 1& .& .& .& .& .\cr
  .& .& q\Ph1& .& -\frac{1}{2}\Ph1& .& 1& .& 1\cr
  .& .& .& q\Ph1& -\frac{1}{2}\Ph1& 1& .& 1& .
 \end{pmatrix}$$
 for a non-split Levi subgroup $\bL_2$ of type $A_1^2$.
\end{prop}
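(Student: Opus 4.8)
The plan is to rerun the descent machinery of Proposition~\ref{prop:A3}, using the regular embedding $\bG=\SL_4\hookrightarrow\tbG=\GL_4$ together with the matrices $\tQ_{\tbL_i}^{\tbG}$ already recorded in Example~\ref{exmp:GLn}(b), while tracking the extra splitting of the regular unipotent class caused by the larger centre. The first step is the class count. Since $q\equiv1\pmod4$ the centre $\bZ=Z(\SL_4)\cong\mu_4$ is $F$-fixed, so $H^1(F,\bZ)\cong\ZZ/4$; as the component group of centralisers of a regular unipotent element is $\bZ$, that class now splits into four $\bG^F$-classes $u_1,\dots,u_4$ rather than two, while every other unipotent class of $\bG^F$, and all five unipotent classes of $\bL_1^F$ and three of $\bL_2^F$, split exactly as in Proposition~\ref{prop:A3}. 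This explains the two extra columns of each table.

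The five non-regular columns are then inherited verbatim from Proposition~\ref{prop:A3}: the corresponding $\bG^F$-classes, their behaviour under~(\ref{eq3}) and under the diagonal $\tbL_i^F$-action, and the $\GL_4$-data on the right-hand side are all unchanged. So only the four regular columns are genuinely new, and for each $i$ the problem reduces to the $2\times4$ block pairing the two regular $\bL_i^F$-classes against $u_1,\dots,u_4$. Here the regular class of $\tbG$ ($n=4$) meets the regular class of $\tbL_i$ ($m=2$), i.e.\ the case $n=4$, $m=2$ recorded above, with $\tQ_{\tbL_i}^{\tbG}(\mathrm{reg},\mathrm{reg})=1$ by Example~\ref{exmp:GLn}(b), leaving the three unknowns $a_1,a_2,a_3$. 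The generator $\sigma$ of $H^1(F,\bZ)\cong\ZZ/4$ permutes the $u_j$ as a $4$-cycle, and $\sigma^2$ generates the image of $H^1(F,\langle\pm1\rangle)$; since $\sigma^2$ fixes each regular $\bL_i^F$-class and pairs the $u_j$ into the two fibres $\{u_1,u_3\},\{u_2,u_4\}$ of the central isogeny $\bG\to\bG/\langle\pm1\rangle$, invariance of $Q_{\bL_i}^\bG$ under the diagonal action shows that each block is constant on these fibres. Combined with~(\ref{eq3}) this forces the two new columns to \emph{double} the corresponding regular column of the $\bG/\langle\pm1\rangle$-table of Proposition~\ref{prop:A3}; what is not yet forced is which fibre carries which value.

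For $\bL_1$ I would break this last ambiguity directly: as $\bL_1$ is a \emph{split} Levi subgroup, Proposition~\ref{prop:split case}(c) applies to each regular $u_j$ and shows that $v\mapsto\tQ_{\bL_1}^\bG(u_j,v)$ is the indicator of a single regular $\bL_1^F$-class, so the $2\times4$ block is a $0$–$1$ matrix with all column sums~$1$. Because $\sigma$ simultaneously shifts the columns and interchanges the two rows, the single ``$1$'' alternates between the rows along consecutive columns, giving the interleaved pattern $\left(\begin{smallmatrix}1&0&1&0\\0&1&0&1\end{smallmatrix}\right)$ up to the labelling of the two regular Levi classes. This settles $\bL_1$ completely.

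The main obstacle is $\bL_2$, where $\bL_2$ is non-split and Proposition~\ref{prop:split case}(c) is no longer available, so fibre-constancy and~(\ref{eq3}) only pin the block down to a single free parameter (equivalently, they identify its column \emph{sums}, but not the split between the two fibres, with those of Proposition~\ref{prop:A3}). To fix the parameter I would reproduce the cuspidality/Mackey argument of Proposition~\ref{prop:A3}: taking $\xi$ on $\bL_1^F$ and $\psi$ on $\bL_2^F$ equal to $\pm1$ on their respective pairs of regular classes and $0$ elsewhere, both are cuspidal, and since the split $\bL_1$ and the twisted $\bL_2$ share no $\bG^F$-conjugate, the Mackey formula gives $\langle R_{\bL_1}^\bG(\xi),R_{\bL_2}^\bG(\psi)\rangle=0$; expanding via the character formula in the (now known) $\bL_1$-values and the unknown $\bL_2$-value yields a linear relation that determines the parameter. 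The residual choice of labelling of the two regular $\bL_2^F$-classes is then removed exactly as in the remark following Proposition~\ref{prop:A3}, by imposing compatibility with Lusztig restriction of Gel'fand--Graev characters in the sense of~\cite{DLM92}. I expect this transfer for the non-split $\bL_2$ to be the only point requiring real work, everything else following formally from the $\ZZ/4$-action and the tables already established.
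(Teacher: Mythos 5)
Your route is genuinely different from the paper's: the paper proves this proposition in one line, by applying Bonnaf\'e's isogeny result \cite[Cor.~2.2.3]{Bo00} to the central surjection $\varphi\colon\SL_4\to\SL_4/\langle\pm1\rangle$ (which has $\ker(\varphi)^\circ=1$), so that the whole table for $\SL_4(q)$, $q\equiv1\pmod4$, including the four regular columns, is just the pullback along $\varphi$ of the table of Proposition~\ref{prop:A3} for $\SL_4/\langle\pm1\rangle$ --- this is exactly your ``doubling'' pattern. Your structural analysis is correct as far as it goes: $H^1(F,\bZ)\cong\ZZ/4$ does act as a $4$-cycle on the regular classes, $\sigma^2$ does act trivially on the regular classes of $\bL_1^F$ and $\bL_2^F$, so diagonal invariance gives constancy on the fibres $\{u_1,u_3\}$, $\{u_2,u_4\}$, and Proposition~\ref{prop:split case}(c) legitimately pins down the regular block for the split Levi $\bL_1$.

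The gap is your claim that the five non-regular columns are ``inherited verbatim from Proposition~\ref{prop:A3}''. For $q\equiv1\pmod4$ that proposition concerns the \emph{quotient} group $\SL_4/\langle\pm1\rangle$, not $\SL_4$, and transporting its values across the isogeny is not a formal consequence of~(\ref{eq3}) and the diagonal action: those only reproduce the same \emph{shape} of constraints, with the same undetermined parameters, now for a different finite group with its own class and centraliser data. (For the split $\bL_1$ this is harmless, since one can recompute everything explicitly from~(\ref{eq2}); you should say so rather than appeal to inheritance.) Where it breaks down is $\bL_2$: the $2^2$-block, i.e.\ the entries of $\tQ_{\bL_2}^\bG$ at $(v_2,v_3)\times(u_3,u_4)$ with values $q\Ph1$ versus $0$, is itself a splitting-against-splitting block carrying one unknown, and in the proof of Proposition~\ref{prop:A3} it was determined only by combining the \emph{quadratic} norm equation with the Mackey orthogonality relation, plus Gel'fand--Graev/rationality to disambiguate. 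If you do not grant yourself those values, your $\bL_2$ problem has two unknowns (the $2^2$ parameter and the regular parameter), and the single linear relation $\langle R_{\bL_1}^\bG(\xi),R_{\bL_2}^\bG(\psi)\rangle=0$ you propose cannot determine both --- in Proposition~\ref{prop:A3} this relation only cuts out a line, $a_1=q\Ph1(1-a_2)$. So you must either rerun the full system of Proposition~\ref{prop:A3} (norm equation, orthogonality, and the disambiguation step) for $\SL_4(q)$ itself, or invoke \cite[Cor.~2.2.3]{Bo00} to justify inheritance; but in the latter case the regular columns are inherited too, and your entire $\ZZ/4$ analysis becomes redundant --- that is precisely the paper's proof.
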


\begin{proof}
This directly follows from Proposition~\ref{prop:A3} and 
\cite[Cor.~2.2.3]{Bo00} applied to the surjection
$\varphi:\SL_4\to\SL_4/\langle\pm1\rangle$ with $\ker(\varphi)^\circ=1$.
We have checked the condition for the latter explicitly (by replacing the
Frobenius morphism with the twisted Frobenius associated with the twisted Levi).
\end{proof}

Here is the counter-part of Proposition~\ref{prop:A3} for $\SU_4(q)$:

\begin{prop}   \label{prop:2A3}
 Let $q$ be an odd prime power and $\bG=\SL_4$ when $q\equiv1\pmod4$, or
 $\bG=\SL_4/\langle\pm1\rangle$ when $q\equiv3\pmod4$, and $F$ a twisted
 Frobenius with respect to an $\FF_q$-structure on $\bG$. Then
 $$\tQ_{\bL_1}^\bG=\begin{pmatrix}
  \Ph2^2\Ph6& \Ph2& 1& 1& .& .& .\cr
  .& .& q\Ph2& .& \hlf\Ph2& 1& .\cr
  .& .& .& q\Ph2& \hlf\Ph2& .& 1
 \end{pmatrix}$$
 for a split Levi subgroup $\bL_1$ with $\bL_1^F=A_1(q^2).(q-1)$, and
 $$\tQ_{\bL_2}^\bG=\begin{pmatrix}
  \Ph4\Ph6& -\Ph1& 1& 1& .& .& .\cr
  .& q^2& .& .& 1& .& .\cr
  .& q^2& .& .& 1& .& .\cr
  .& .& q\Ph1& .& -\hlf\Ph2& .& 1\cr
  .& .& .& q\Ph1& -\hlf\Ph2& 1& .\cr
 \end{pmatrix}$$
 for a non-split Levi subgroup $\bL_2$ with $\bL_2^F=A_1(q)^2.(q+1)$.
\end{prop}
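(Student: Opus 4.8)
The plan is to mirror the argument of Proposition~\ref{prop:A3} under Ennola duality, exploiting the fact that $\SU_4(q)$ is the twisted version of $\SL_4(q)$. First I would record the combinatorial setup: for the group $\bG$ in the statement, $\bG^F = \SU_4(q)$ has centre of order~$2$ and seven unipotent classes $u_1,\ldots,u_7$, where the classes of $\bG$ with Jordan types $2^2$ and $4$ each split into two finite classes, exactly as in Proposition~\ref{prop:A3}. The roles of the two Levi subgroups are now swapped: $\bL_1$ is split of type $A_1(q^2).(q-1)$ (so its regular unipotent class splits, giving a $3$-row matrix), while $\bL_2$ is non-split of type $A_1(q)^2.(q+1)$ (giving a $5$-row matrix, with the $(2,2)$-class of $\bL_2$ splitting).

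The backbone of the computation is Ennola duality: the ordinary Green functions of $\tw2A_3(q)$ are obtained from those of $A_3(q)$ by the substitution $q \mapsto -q$, as already used in Example~\ref{exmp:class}(d), and this induces the corresponding permutation of maximal tori in the linear system~($*$). Combined with the fact that all proper Levi subgroups are of type~$A$, this lets me import the two-parameter Green functions $\tQ_\tbL^\tbG$ for the regular embedding $\tbG=\GU_4$ directly from the $\GU$-analogues of Example~\ref{exmp:GLn}(b) (via $q\mapsto -q$). I would then apply the descent relations~(\ref{eq3}) together with the diagonal $\tbL^F$-invariance of $\QLG$: for the split Levi $\bL_1$ this is the case $n=2,m=4$ applied to the splitting $(2,2)$-class, and for the non-split Levi $\bL_2$ it is the case $n=4,m=2$. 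In each instance the descent pins down all but a small number of unknown entries, precisely the entries sitting at intersections of splitting classes. As in Proposition~\ref{prop:A3}, only the genuinely two-block pairs and the regular-unipotent row require extra work; the explicit permutation-character computation~(\ref{eq2}) is unavailable here because $\bL_2$ is non-split, so the unknowns must instead be fixed by norm relations.

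The decisive step, and the \emph{main obstacle}, is therefore the resolution of the remaining unknowns by the Mackey-formula / norm argument, transported across Ennola duality. I would introduce the cuspidal class function $\psi$ on the relevant (non-split) Levi taking values $1,-1$ on the pair of splitting regular unipotent classes and $0$ elsewhere, together with the auxiliary function $\xi$ on the other Levi, exactly as in the proof of Proposition~\ref{prop:A3}. The norm identity $\langle R(\psi),R(\psi)\rangle = N\langle\psi,\psi\rangle$ with $N\in\{1,2\}$, expanded through the character formula of Theorem~\ref{thm:charform}, yields a quadratic Diophantine constraint in the unknowns $a_1,a_2$; under $q\mapsto -q$ the cyclotomic factors $\Ph1,\Ph2,\Ph4,\Ph6$ interchange appropriately, so I expect the analogue of the equation $(2a_1-q\Ph1)^2 + q^2\Ph1\Ph2(2a_2-1)^2 = Nq^3\Ph1$ to appear with $\Ph1$ and $\Ph2$ swapped. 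The orthogonality relation $\langle R_{\bL_1}^\bG(\xi), R_{\bL_2}^\bG(\psi)\rangle = 0$ supplies the second, linear equation, and rationality of the solution forces $N=2$ (so the two regular classes do not fuse in $\bG^F$). This leaves two candidate solution pairs differing only by an interchange of the two split rows.

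Finally, to single out the correct table for a fixed choice of class representatives I would invoke the same normalisation as after Proposition~\ref{prop:A3}: Conjecture~5.2 of \cite{DLM92}, which holds in this case, asserts that the Lusztig restriction of a Gel'fand--Graev character of $\bG^F$ is (up to a sign fixed by positivity of the degree) a Gel'fand--Graev character of the Levi. An explicit computation of these Gel'fand--Graev characters for $\SU_4(q)$ eliminates one of the two candidate solutions and yields the unique matrices displayed in the statement. The separate congruence conditions on $q$ (namely $\bG=\SL_4$ for $q\equiv 1\pmod 4$ versus $\bG=\SL_4/\langle\pm1\rangle$ for $q\equiv 3\pmod 4$) enter only in identifying which isogeny type has a centre of order~$2$ admitting the twisted Frobenius with the asserted splitting behaviour, and this can be checked exactly as in the untwisted case, reducing between the two via \cite[Cor.~2.2.3]{Bo00} in the manner of Proposition~\ref{prop:SL4,q=1(4)}.
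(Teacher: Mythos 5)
Your proposal is sound, and it reaches the result by a route that differs from the paper's in where the computation is anchored. The paper does not work in $\SU_4(q)$ directly: to avoid carrying out the computation twice (once per congruence class of $q$), it computes the Green functions once for the Levi subgroups $A_1(q^2).(q^2-1)$ and $A_1(q)^2.(q+1)^2$ inside the Levi $\tw2A_3(q)(q+1)$ of $\Spin_8^+(q)$ --- the split case by the explicit formula~(\ref{eq2}), the non-split case by the same cuspidal-function/norm-equation argument you describe --- and then invokes \cite[Cor.~2.2.3]{Bo00} to transfer the answer to both groups in the statement simultaneously. You instead stay inside the stated group, import the connected-centre data $\tQ_\tbL^\tbG$ for $\tbG=\operatorname{GU}_4$ by Ennola duality from Example~\ref{exmp:GLn}(b), and descend via~(\ref{eq3}). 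That step is legitimate: the paper validates exactly this Ennola argument for 2-parameter Green functions (via Proposition~\ref{prop:solve} and the linear system~($*$), with the attendant matching of classes) in Example~\ref{exmp:class}(d). Your resolution of the remaining unknowns --- norm equation from the Mackey formula, orthogonality of Lusztig-induced cuspidal class functions, $N=2$ forced by rationality, Gel'fand--Graev restriction fixing which of the two symmetric solutions is correct, and \cite[Cor.~2.2.3]{Bo00} to pass between the two isogeny types --- is precisely the mechanism of Proposition~\ref{prop:A3} that the paper also appeals to. In short: your route keeps everything inside rank-3 unitary groups at the cost of the Ennola step, while the paper's route detours through $\Spin_8^+(q)$ but settles both congruence cases with one computation and produces exactly the data later consumed by Lemma~\ref{lem:prod} in the proof of Theorem~\ref{thm:Spin8}.

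One correction: your identification of the splitting pattern is off. For both stated groups $H^1(F,Z(\bG))$ has order~$2$, so every unipotent class of $\bG$, $\bL_1$ or $\bL_2$ splits into at most two rational classes; all relevant blocks are instances of the case $n=m=2$, not $(n,m)=(2,4)$ or $(4,2)$ --- those cases occur only for $\Spin_8^+(q)$ itself, where $H^1(F,\bZ)$ has order~$4$. The slip is harmless here: your subsequent bookkeeping (one unknown per splitting pair, hence two unknowns $a_1,a_2$, one quadratic norm equation plus one linear orthogonality relation, two candidate solutions swapped by relabelling the split rows) is exactly what the $n=m=2$ case produces, so the argument goes through as you wrote it.
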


\begin{proof}
Instead of doing the same computation twice for the stated groups and
congruences
we did the computation once for the Levi subgroups $A_1(q^2).(q^2-1)$ and 
$A_1(q)^2.(q+1)^2$ inside the Levi $\tw2A_3(q)(q+1)$ of $\Spin_8^+(q)$.
Then, \cite[Cor.~2.2.3]{Bo00} assures that the solution is what we claim.
The argument is analogous to the proof of Proposition~\ref{prop:A3}.
By explicit calculations one finds the result for the split case. Then, thanks
to scalar products of Lusztig-induced cuspidal class functions and the norm
equation one can fix the unknowns for the non-split case.
\end{proof}

Again the Lusztig restriction of Gel'fand--Graev characters can be used to fix
the position of the values.

\subsection{The Green functions for $\Spin_8^+(q)$}   \label{subsec:Spin8+}
Let $\bG=\Spin_8$ with a split Frobenius $F$, such that $\bG^F=\Spin_8^+(q)$
with $q$ odd. Moreover, let $\bG\rightarrow\tbG$ be a regular embedding.
Of the 13 unipotent classes of $\bG$ six split into two and three split into
four classes in $\bG^F$. By the general discussion at the beginning of the
section this means that every splitting unipotent class of a Levi subgroup $\bL$
introduces at least 9 unknowns in the matrix $\tQLG$. It turns out that this
number can be reduced to 5 thanks to the diagonal action of $\tbL^F$. So we are
missing 5 equations per Levi subgroup and splitting class of that Levi.

The types of maximal Levi subgroups with disconnected centre of $\Spin_8^+(q)$
are $M_1=A_3(q)(q-1)$,  $M_2=A_1(q)^3(q-1)$, $M_3=\tw2A_3(q)(q+1)$,
$M_4=A_1(q)^3(q+1)$ and $L_5=A_1(q^2)(q^2+1)$. It turns out that to compute the
Green functions for these Levis we need also to consider minimal Levi
subgroups with disconnected centre $L_1=A_1(q)^2(q-1)^2$,
$L_2=A_1(q^2)(q^2-1)$, $L_3=A_1(q)^2(q^2-1)$ and $L_4=A_1(q)^2(q+1)^2$.
Figure~\ref{fig:Levi_Lattice} summarises the situation by showing the diagram of
inclusions for these Levi subgroups. Each of these has one splitting
unipotent class, and we denote by $f_i$ the (cuspidal) class function of $L_i$
that has values 1, -1 on the splitting classes and 0 elsewhere.

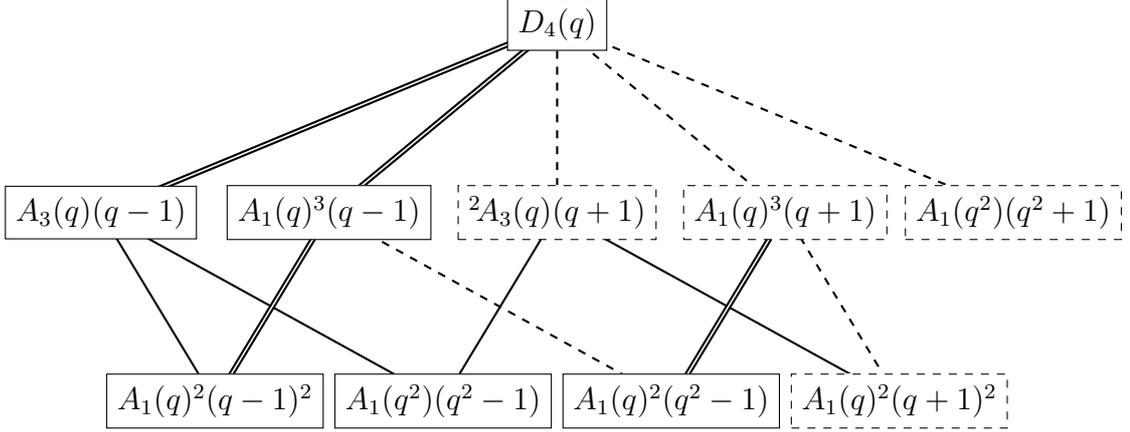
\begin{figure}[ht]
\caption{Subgroup lattice of Levi subgroups  with disconnected centre of
 $\Spin_8^+(q)$, up to triality. The lines represent inclusions: a single line
 corresponds to Levis already treated in Propositions~\ref{prop:A3}
 and~\ref{prop:2A3}, a double line indicates a split Levi and, therefore, that
 the Green function can be computed explicitly using~(\ref{eq2}), dashed lines
 indicate non-split Levis for which the Green functions must be computed with
 methods similar to those used in the proof of Proposition~\ref{prop:A3}.}   \label{fig:Levi_Lattice}
\begin{tikzpicture}
\node(a) at (6,5) {};

\node(b1) at (0,2.5){};
\node(b2) at (3,2.5) {};
\node(b3) at (6,2.5) {};
\node(b4) at (9,2.5) {};
\node(b5) at (12,2.5) {};

\node(c1) at (1.5,0) {};
\node(c2) at (4.5,0) {};
\node(c3) at (7.5,0) {};
\node(c4) at (10.5,0) {};

\draw [thick,double] (a) -- (b1);
\draw [thick,double] (a) -- (b2);
\draw [thick,dashed] (a) -- (b3);
\draw [thick,dashed] (a) -- (b4);
\draw [thick,dashed] (a) -- (b5);

\draw [thick] (b1) -- (c1);
\draw [thick] (b1) -- (c2);

\draw [thick,double] (b2) -- (c1);
\draw [thick,dashed] (b2) -- (c3);

\draw [thick] (b3) -- (c4);
\draw [thick] (b3) -- (c2);

\draw [thick,double] (b4) -- (c3);
\draw [thick,dashed] (b4) -- (c4);

\node[fill=white,rectangle,draw] at (a) {$D_4(q)$};

\node[fill=white,rectangle,draw] at (b1) {$A_3(q)(q-1)$};
\node[fill=white,rectangle,draw] at (b2) {$A_1(q)^3(q-1)$};
\node[fill=white,rectangle,draw,dashed] at (b3) {$\tw2A_3(q)(q+1)$};
\node[fill=white,rectangle,draw,dashed] at (b4) {$A_1(q)^3(q+1)$};
\node[fill=white,rectangle,draw,dashed] at (b5) {$A_1(q^2)(q^2+1)$};

\node[fill=white,rectangle,draw] at (c1) {$A_1(q)^2(q-1)^2$};
\node[fill=white,rectangle,draw] at (c2) {$A_1(q^2)(q^2-1)$};
\node[fill=white,rectangle,draw] at (c3) {$A_1(q)^2(q^2-1)$};
\node[fill=white,rectangle,draw,dashed] at (c4) {$A_1(q)^2(q+1)^2$};
\end{tikzpicture}
\end{figure}

The reason why we need $L_1$ to $L_4$ is because we want to emulate the proof
of Proposition~\ref{prop:A3}. While 
$$\big\langle R_{\bL_i}^\bG f_i,\, R_{\bL_j}^\bG f_j \big\rangle = 0
  \qquad\text{for }i,j=1,\dots,5,\ i\neq j$$
by the Mackey formula, an analogous formula for $M_i$, $i=1,\dots,4$, need not
hold since a class function $f$ of $M_i$ taking 1, -1 on splitting
unipotent classes is not cuspidal. However, 
$$\big\langle R_{\bM_i}^\bG f,\,R_{\bL_j}^\bG f_j \big\rangle = 0
  \quad\text{for all $i,j$ such that $L_j\nsubseteq M_i$}$$
again, by the Mackey formula.

In conclusion, the plan to get the 2-parameter Green functions of
$\Spin_8^+(q)$ is the following. First, we determine $\tQLG$ for the split
Levi subgroups $\bL$ by explicit computations with (2). Second, we compute
$\tQ_{\bL_i}^\bG$ for $i=2,\dots,5$ thanks to Propositions~\ref{prop:A3}
and~ \ref{prop:2A3} and Lemma~\ref{lem:prod}. Then using this knowledge and
Lemma~\ref{lem:prod} we get $\tQLG$ for the maximal Levi subgroups $\bL$.

Unfortunately, for the non-split Levis only 4 of the 5 equations (one of which
is the norm equation) needed to find the unknowns can be obtained by the
procedure described above. However, Digne, Lehrer and Michel prove in
\cite[Thm.~3.7]{DLM97} and \cite[Conj.~5.2 and 5.2']{DLM92} that, when $q$ is
"large enough" ($q>q_0$ for $q_0$ a constant depending only on the Dynkin
diagram), for all regular unipotent elements $u$ of $\tbG^F$, $\tQLG(u,v)=1$
for exactly one regular unipotent class $(v)^{\bL^F}$ and $\tQLG(u,v')=0$ for
all other regular unipotent elements $v'$ of $\bL^F$. This gives the last
equation.

\begin{thm}   \label{thm:Spin8}
 The 2-parameter Green functions for $G=\Spin_8^+(q)$, $q$ odd, satisfy:
 \begin{enumerate}
  \item[\rm(a)] For the maximal $F$-stable split Levi subgroups $\bL$ of $\bG$,
   $\tQLG$ is given in Tables~\ref{tab:split_a3}--\ref{tab:split_a1a1a1},
  \item[\rm(b)] For the maximal $F$-stable non-split Levi subgroups $\bL$ of
   $\bG$, $\tQLG$ is given in Tables~\ref{tab:nsplit_2a3}--\ref{nsplit_a1q2}
   for large enough $q$.
 \end{enumerate}
 Representatives for all the unipotent classes are given in the Steinberg
 presentation in Tables~\ref{tab:classes_G}--\ref{tab:classes_last_Levi}, the
 numbering is chosen such that the centre node of the Dynkin diagram is
 labelled 3.
\end{thm}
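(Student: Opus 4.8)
The plan is to follow the inductive scheme sketched above, building the Green functions up from the smallest Levi subgroups to the maximal ones while exploiting the regular embedding $\bG\hookrightarrow\tbG$ throughout. Since every proper Levi subgroup of $\tbG$ is of type $A$ with connected centre, Proposition~\ref{prop:solve} (covered already by Theorem~\ref{thm:DM}) determines each $\tQ_\tbL^\tbG$ completely; these play the role of the known quantities on the connected-centre side. The whole task is then to descend to $\bG$ using the splitting relations~(\ref{eq3}) and the diagonal $\tbL^F$-action, which together replace each pair of splitting classes by one of the submatrix templates above and cut the number of genuinely unknown entries down to five per splitting class of each Levi.

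First I would handle the two split maximal Levi subgroups, of types $A_3(q)(q-1)$ and $A_1(q)^3(q-1)$: choosing $\bU$ to be $F$-stable, formula~(\ref{eq2}) realises $\QLG(u,v)$ as the value of an explicit permutation character, and a direct count on the Steinberg representatives of Tables~\ref{tab:classes_G}--\ref{tab:classes_last_Levi} yields Tables~\ref{tab:split_a3} and~\ref{tab:split_a1a1a1}, establishing part~(a). Next I would compute $\tQ_{\bL_i}^\bG$ for $i=2,\dots,5$ by combining the $\SL_4$- and $\SU_4$-type Green functions of Propositions~\ref{prop:A3} and~\ref{prop:2A3} with the transitivity Lemma~\ref{lem:prod}, and then apply Lemma~\ref{lem:prod} once more to express the Green functions of the remaining maximal Levis through these intermediate data. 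For the non-split maximal Levis this last step does not close the system: after the reduction afforded by the diagonal $\tbL^F$-action it leaves exactly five unknown entries per splitting class.

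To pin down these residual unknowns I would argue exactly as in the proof of Proposition~\ref{prop:A3}. With the cuspidal class functions $f_i$ on the minimal Levis $\bL_1,\dots,\bL_5$, and class functions $f$ on $\bM_1,\dots,\bM_4$ taking values $1,-1$ on a pair of splitting unipotent classes and $0$ elsewhere, the Mackey formula supplies the orthogonalities $\langle R_{\bL_i}^\bG f_i,R_{\bL_j}^\bG f_j\rangle=0$ for $i\neq j$ and $\langle R_{\bM_i}^\bG f,R_{\bL_j}^\bG f_j\rangle=0$ whenever $L_j\nsubseteq M_i$, to which I would add the norm equation for $R_\bL^\bG f$ itself. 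Expanding these inner products through the character formula of Theorem~\ref{thm:charform} turns them into polynomial equations in the unknowns, which I expect to reduce the solution set to a short finite list; the residual ambiguity in the placement of values among fused classes is then removed by the Gel'fand--Graev argument used after Proposition~\ref{prop:A3}, namely that Lusztig restriction carries Gel'fand--Graev characters to Gel'fand--Graev characters.

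The main obstacle is that this system delivers only four of the five independent equations needed per non-split Levi: none of the orthogonality or norm relations constrains the values on the regular unipotent classes. To supply the missing equation I would invoke the theorem of Digne--Lehrer--Michel (\cite[Thm.~3.7]{DLM97} together with \cite[Conj.~5.2 and 5.2']{DLM92}), which for $q>q_0$ forces $\tQLG(u,v)=1$ on a unique regular unipotent class $v$ of $\bL^F$ and $0$ on all other regular unipotent classes. This closes the system for the non-split maximal Levis and accounts exactly for the hypothesis that $q$ be large enough in part~(b).
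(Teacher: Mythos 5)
Your proposal is correct and takes essentially the same route as the paper's own proof: explicit computation via~(\ref{eq2}) for the split maximal Levi subgroups, Propositions~\ref{prop:A3} and~\ref{prop:2A3} combined with Lemma~\ref{lem:prod} for the intermediate Levis, Mackey-formula orthogonality and norm equations for the non-split cases, and the Digne--Lehrer--Michel result on regular unipotent elements to supply the missing equation (which is exactly what forces the large-$q$ hypothesis in part~(b)), with Gel'fand--Graev restriction resolving the final placement ambiguity. The only detail you omit is the paper's additional integrality check $|C_{\bL^F}(v)|\tQLG(u,v)\in\ZZ$ used for $q>3$ to discard one of the two rational solutions of the norm equation, but this does not change the structure of the argument.
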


The tables for $\Spin_8^+(q)$ and for the Levi subgroups of type $A_1^3$ also
show the action of the triality automorphism on the unipotent classes.

\begin{proof}
Part (a) follows by explicit computation of the fusion of unipotent classes of
$\bG^F$ and an application of~(\ref{eq2}). This gives the 2-parameter Green
functions for $M_1$, $M_2$ and $L_1$. Thanks to Lemma~\ref{lem:prod} and
Proposition~\ref{prop:A3} we also obtain $\tQ_{\bL_2}^\bG$.

Analogously to Proposition~\ref{prop:A3} we can compute $\tQ_{\bL_1}^{\bM_2}$
(explicitly) and $\tQ_{\bL_3}^{\bM_2}$ (by scalar products). Again,
Lemma~\ref{lem:prod} now gives $\tQ_{\bL_3}^\bG$.

The situation is now graphically summarised in Figure~\ref{fig:Levi_Lattice}.
The Levi subgroups in solid boxes are those with known Green functions (which
are valid for all odd $q$), while those in dashed boxes have to be found and,
unfortunately, will be shown to be valid only for large enough $q$. The reason
is the following. Both $\tQ_{\bL_4}^\bG$ and $\tQ_{\bL_5}^\bG$ have 5 unknown
entries and in total we get 7 scalar products of the form
$$\big\langle R_{\bL_i}^\bG f_i,\, R_{\bL_j}^\bG f_j \big\rangle = 0
  \quad \text{for } i\neq j,$$
plus two norm equations. Since (as can be seen from
Figure~\ref{fig:Levi_Lattice}) equations of the form
$$\big\langle R_{\textbf{M}_i}^\bG f,\, R_{\bL_j}^\bG f_j \big\rangle = 0
  \qquad\text{for } L_j\nsubseteq M_i$$
do not provide new information, the only course of action that we see, at
present, is to use the above mentioned result of Digne, Lehrer and Michel. 

Setting $\tQLG(u,v)$ to 1 or 0 for pairs of regular unipotent elements
allows us to solve the system of equations. Then, the Lusztig restriction of
explicitly computed Gel'fand--Graev characters allows us to select the only
correct solution. We mention, however, that the norm equation now yields two
rational solutions. For $q>3$, only one of them satisfies
$|C_{\bL^F}(v)|\tQLG(u,v)\in \ZZ$ which has to hold by the definition of the
modified Green functions.

Thanks to the knowledge of $\tQ_{\bL_i}^\bG$ for $i=2,3,4$ we can repeatedly
use Lemma~\ref{lem:prod} to determine the remaining Green functions.

The unipotent conjugacy classes are computed explicitly by fusing the classes
of a maximal unipotent subgroup under elements of a maximally split torus and
elements of the Weyl group. The precise action of the triality automorphism
follows from the knowledge of the fusion of the unipotent classes.
\end{proof}

\begin{rem}
Since the publication of the first version of this document L\"ubeck
\cite{Lue20} used a theoretically more involved method to also compute
2-parameter Green functions.
In short, he uses the Springer correspondence to compute some generalised Green
functions. Then he uses the fact that Lusztig induction of a generalised Green
function is a generalised Green function (with some conditions). This provides
him with the equations that are missing when considering only the ordinary Green
functions, like here.

With this he finds the same result as in our Tables~\ref{tab:split_a1a1a1}
and~\ref{tab:nsplit_a1a1a1_qp1}. However, as he states, with his method the
"large $q$" requirement can be dropped, so thanks to Lemma~\ref{lem:prod} all
of our tables are valid for arbitrary (odd)
$q$.
\end{rem}

\noindent
{\bf Acknowledgement}: We thank Frank L\"ubeck for pointing out that it might be
possible to explicitly compute 2-parameter Green functions using results on
ordinary Green functions and Jean Michel for alerting us to \cite{DiMi0,DiMi1}.


\begin{table}[bh]
\caption{$\tQLG(u,v)$ for the split Levi subgroup $A_3(q)(q-1)$ of $\Spin_8^+(q)$. \label{tab:split_a3}}
$$\begin{array}{c|cccccccccccc}
 & u_1 & u_2 & u_3 & u_4 & u_5 & u_6 & u_7 & u_8 & u_9 & u_{10} & u_{11} & u_{12} \\
 \hline
v_1 & \Phi_2^2\Phi_4\Phi_6 & \Phi_2 & . & . & . & . & 1 & 1 & . & . & . & . \\
v_2 & . & q^2\Phi_2^2 & \Phi_2\Phi_4 & \Phi_2\Phi_4 & \Phi_2\Phi_4 & \Phi_2\Phi_4 & . & . & 1 & 1 & 1 & 1 \\
v_3 & . & . & . & . & . & . & q\Phi_2\Phi_4 & . & q\Phi_2 & . & q\Phi_2 & . \\
v_4 & . & . & . & . & . & . & . & q\Phi_2\Phi_4 & . & q\Phi_2 & . & q\Phi_2 \\
v_5 & . & . & . & . & . & . & . & . & . & . & . & . \\
v_6 & . & . & . & . & . & . & . & . & . & . & . & . \\
v_7 & . & . & . & . & . & . & . & . & . & . & . & . 
\end{array}$$

$$\begin{array}{c|cccccccccccccccc}
 & u_{13} & u_{14} & u_{15} & u_{16} & u_{17} & u_{18} & u_{19} & u_{20} & u_{21} & u_{22} & u_{23} & u_{24} & u_{25} & u_{26} & u_{27} & u_{28} \\
 \hline
v_1 & . & . & . & . & . & . & . & . & . & . & . & . & . & . & . & . \\
v_2 & 2 & . & . & . & . & . & . & . & . & . & . & . & . & . & . & . \\
v_3 & \frac{\Phi_1}{2} & \frac{\Phi_2}{2} & . & . & . & . & 1 & . & . & . & . & . & . & . & . & . \\
v_4 & \frac{\Phi_1}{2} & \frac{\Phi_2}{2} & . & . & . & . & . & 1 & . & . & . & . & . & . & . & . \\
v_5 & 2q^2 & . & \Phi_2 & \Phi_2 & \Phi_2 & \Phi_2 & . & . & 1 & 1 & 1 & 1 & . & . & . & . \\ \cline{10-13}
v_6 & . & . & . & . & . & . & q\Phi_2 & . & \multicolumn{4}{|c|}{\multirow{2}{*}{\Large{$\ast$}}} & 1 & . & 1 & .\\
v_7 & . & . & . & . & . & . & . & q\Phi_2 & \multicolumn{4}{|c|}{} & . & 1 & . & 1 \\ \cline{10-13}
\end{array}$$
\vskip 1pc
\begin{minipage}{0.5\textwidth}
\centering{ $q\equiv1\pmod4$ }
$$\begin{array}{c|cccc}
 & u_{21} & u_{22} & u_{23} & u_{24} \\ \hline
v_6 & q & . & q & . \\
v_7 & . & q & . & q 
\end{array}$$
\end{minipage}
\begin{minipage}{0.49\textwidth}

\centering{ $q\equiv3\pmod4$ }
$$\begin{array}{c|cccc}
 & u_{21} & u_{22} & u_{23} & u_{24} \\ \hline
v_6 & . & q & . & q \\
v_7 & q & . & q & . 
\end{array}$$
\end{minipage}
\end{table}

\begin{table}[ht]
\tiny
\caption{$\tQLG(u,v)$ for the split Levi subgroup $A_1(q)^3(q-1)$ of $\Spin_8^+(q)$. \label{tab:split_a1a1a1}}
$$\begin{array}{c|cccccccccccc}
 & u_1 & u_2 & u_3 & u_4 & u_5 & u_6 & u_7 & u_8 & u_9 & u_{10} & u_{11} & u_{12} \\
 \hline
v_1 & \Phi_2\Phi_3\Phi_4^2\Phi_6 & * & \Phi_2\Phi_4 & \Phi_2\Phi_4 & \Phi_2\Phi_4 & \Phi_2\Phi_4 & \Phi_2\Phi_4 & \Phi_2\Phi_4 & \Phi_2 & \Phi_2 & \Phi_2 & \Phi_2 \\
v_2 & . & q^4\Phi_2 & . & . & . & . & q^2\Phi_4 & q^2\Phi_4 & . & . & . & . \\
v_3 & . & q^4\Phi_2 & . & . & q^2\Phi_4 & q^2\Phi_4 & . & . & . & . & . & . \\
v_4 & . & q^4\Phi_2 & q^2\Phi_4 & q^2\Phi_4 & . & . & . & . & . & . & . & . \\
v_5 & . & . & q^2\Phi_2\Phi_4 & . & . & . & . & . & q^2 & . & . & q^2 \\
v_6 & . & . & . & q^2\Phi_2\Phi_4 & . & . & . & . & . & q^2 & q^2 & . \\
v_7 & . & . & . & . & q^2\Phi_2\Phi_4 & . & . & . & q^2 & q^2 & . & . \\
v_8 & . & . & . & . & . & q^2\Phi_2\Phi_4 & . & . & . & . & q^2 & q^2 \\
v_9 & . & . & . & . & . & . & q^2\Phi_2\Phi_4 & . & q^2 & . & q^2 & . \\
v_{10} & . & . & . & . & . & . & . & q^2\Phi_2\Phi_4 & . & q^2 & . & q^2 \\
v_{11} & . & . & . & . & . & . & . & . & q^3\Phi_2 & . & . & . \\
v_{12} & . & . & . & . & . & . & . & . & . & q^3\Phi_2 & . & . \\
v_{13} & . & . & . & . & . & . & . & . & . & . & q^3\Phi_2 & . \\
v_{14} & . & . & . & . & . & . & . & . & . & . & . & q^3\Phi_2
\end{array}$$

$$\begin{array}{c|cccccccccccccccc}
 & u_{13} & u_{14} & u_{15} & u_{16} & u_{17} & u_{18} & u_{19} & u_{20} & u_{21} & u_{22} & u_{23} & u_{24} & u_{25} & u_{26} & u_{27} & u_{28} \\
 \hline
v_1 & 1 & 1 & . & . & . & . & . & . & . & . & . & . & . & . & . & . \\
v_2 & 2q & . & . & . & . & . & 1 & 1 & . & . & . & . & . & . & . & . \\
v_3 & 2q & . & . & . & 1 & 1 & . & . & . & . & . & . & . & . & . & . \\
v_4 & 2q & . & 1 & 1 & . & . & . & . & . & . & . & . & . & . & . & . \\
v_5 & q\Phi_1 & . & \Phi_2 & . & . & . & . & . & 1 & 1 & . & . & . & . & . & . \\
v_6 & q\Phi_1 & . & . & \Phi_2 & . & . & . & . & . & . & 1 & 1 & . & . & . & .\\
v_7 & q\Phi_1 & . & . & . & \Phi_2 & . & . & . & 1 & . & . & 1 & . & . & . & . \\
v_8 & q\Phi_1 & . & . & . & . & \Phi_2 & . & . & . & 1 & 1 & . & . & . & . & . \\ \cline{10-13}
v_9 & q\Phi_1 & . & . & . & . & . & \Phi_2 & . & \multicolumn{4}{|c|}{\multirow{7}{*}{\Large{$\ast$}}} & . & . & . & . \\
v_{10} & q\Phi_1 & . & . & . & . & . & . & \Phi_2 & \multicolumn{4}{|c|}{} & . & . & . & . \\
v_{11} & \frac{q\Phi_1^2}{4} & \frac{q\Phi_2^2}{4} & \frac{\Phi_1\Phi_2}{2} & . & \frac{\Phi_1\Phi_2}{2} & . & \frac{\Phi_1\Phi_2}{2} & . & \multicolumn{4}{|c|}{} & 1 & . & . & . \\
v_{12} & \frac{q\Phi_1^2}{4} & \frac{q\Phi_2^2}{4} & . & \frac{\Phi_1\Phi_2}{2} & \frac{\Phi_1\Phi_2}{2} & . & . & \frac{\Phi_1\Phi_1}{2} & \multicolumn{4}{|c|}{} & . & 1 & . & . \\
v_{13} & \frac{q\Phi_1^2}{4} & \frac{q\Phi_2^2}{4} & . & \frac{\Phi_1\Phi_2}{2} & . & \frac{\Phi_1\Phi_2}{2} & \frac{\Phi_1\Phi_2}{2} & . & \multicolumn{4}{|c|}{} & . & . & 1 & . \\
v_{14} & \frac{q\Phi_1^2}{4} & \frac{q\Phi_2^2}{4} & \frac{\Phi_1\Phi_2}{2} & . & . & \frac{\Phi_1\Phi_2}{2} & . & \frac{\Phi_1\Phi_2}{2} & \multicolumn{4}{|c|}{} & . & . & . & 1 \\ \cline{10-13}
\end{array}$$

$*=q^4+3q^3+3q^2+q+1$
\vskip 1pc
\begin{minipage}{0.5\textwidth}
\centering{ $q\equiv1\pmod4$ }
$$\begin{array}{c|cccc}
 & u_{21} & u_{22} & u_{23} & u_{24} \\ \hline
v_9    & 1 & . & 1 & . \\
v_{10} & . & 1 & . & 1 \\
v_{11} & \frac{q-5}{4} & \frac{\Phi_1}{4} & \frac{\Phi_1}{4} & \frac{\Phi_1}{4} \\
v_{12} & \frac{\Phi_1}{4} & \frac{\Phi_1}{4} & \frac{\Phi_1}{4} & \frac{q-5}{4} \\
v_{13} & \frac{\Phi_1}{4} & \frac{\Phi_1}{4} & \frac{q-5}{4} & \frac{\Phi_1}{4} \\
v_{14} & \frac{\Phi_1}{4} & \frac{q-5}{4} & \frac{\Phi_1}{4} & \frac{\Phi_1}{4} 
\end{array}$$
\end{minipage}
\begin{minipage}{0.49\textwidth}

\centering{ $q\equiv3\pmod4$ }
$$\begin{array}{c|cccc}
 & u_{21} & u_{22} & u_{23} & u_{24} \\ \hline
v_9    & . & 1 & . & 1 \\
v_{10} & 1 & . & 1 & . \\
v_{11} & \frac{q-3}{4} & \frac{q-3}{4} & \frac{\Phi_2}{4} & \frac{q-3}{4} \\
v_{12} & \frac{q-3}{4} & \frac{\Phi_2}{4} & \frac{q-3}{4} & \frac{q-3}{4} \\
v_{13} & \frac{\Phi_2}{4} & \frac{q-3}{4} & \frac{q-3}{4} & \frac{q-3}{4} \\
v_{14} & \frac{q-3}{4} & \frac{q-3}{4} & \frac{q-3}{4} & \frac{\Phi_2}{4} 
\end{array}$$
\end{minipage}

\end{table}

\begin{table}[ht]
\caption{(Modified) 2-parameter Green functions for $\tw2A_3(q)(q+1)$ for $q$ large. \label{tab:nsplit_2a3}}
\scriptsize
$$\begin{array}{c|cccccccccccc}
 & u_{1} & u_{2} & u_{3} & u_{4} & u_{5} & u_{6} & u_{7} & u_{8} & u_{9} & u_{10} & u_{11} & u_{12}  \\\hline
v_1 & \Phi_1^2\Phi_3\Phi_4 & -\Phi_1 & . & . & . & . & 1 & 1 & . & . & . & . \\
v_2 & . & q^2\Phi_1^2 & -\Phi_1\Phi_4 & -\Phi_1\Phi_4 & -\Phi_1\Phi_4 & -\Phi_1\Phi_4 & . & . & 1 & 1 & 1 & 1 \\
v_3 & . & . & . & . & . & . & q\Phi_1\Phi_4 & . & q\Phi_1 & . & q\Phi_1 & . \\
v_4 & . & . & . & . & . & . & . & q\Phi_1\Phi_4 & . & q\Phi_1 & . & q\Phi_1 \\
v_5 & . & . & . & . & . & . & . & . & . & . & . & . \\
v_6 & . & . & . & . & . & . & . & . & . & . & . & . \\
v_7 & . & . & . & . & . & . & . & . & . & . & . & . \\
\end{array}$$

$$\begin{array}{c|cccccccccccccccc}
& u_{13} & u_{14} & u_{15} & u_{16} & u_{17} & u_{18} & u_{19} & u_{20} & u_{21} & u_{22} & u_{23} & u_{24} & u_{25} & u_{26} & u_{27} & u_{28} \\\hline
v_1 & . & . & . & . & . & . & . & . & . & . & . & . & . & . & . & . \\
v_2 & . & 2 & . & . & . & . & . & . & . & . & . & . & . & . & . & . \\
v_3 & -\frac{\Phi_1}{2} & -\frac{\Phi_2}{2} & . & . & . & . & . & 1 & . & . & . & . & . & . & . & . \\
v_4 & -\frac{\Phi_1}{2} & -\frac{\Phi_2}{2} & . & . & . & . & 1 & . & . & . & . & . & . & . & . & . \\
v_5 & . & 2q^2 & -\Phi_1 & -\Phi_1 & -\Phi_1 & -\Phi_1 & . & . & 1 & 1 & 1 & 1 & . & . & . & . \\ \cline{10-13}
v_6 & . & . & . & . & . & . & q\Phi_1 & . & \multicolumn{4}{|c|}{\multirow{2}{*}{\Large{$\ast$}}} & . & 1 & . & 1 \\
v_7 & . & . & . & . & . & . & . & q\Phi_1 & \multicolumn{4}{|c|}{} & 1 & . & 1 & . \\ \cline{10-13}
\end{array}$$
\vskip 1pc
\begin{minipage}{0.5\textwidth}
\centering{ $q\equiv1\pmod4$ }
$$\begin{array}{c|cccc}
 & u_{21} & u_{22} & u_{23} & u_{24} \\ \hline
v_6 & -q & . & -q & . \\
v_7 & . & -q & . & -q 
\end{array}$$
\end{minipage}
\begin{minipage}{0.49\textwidth}

\centering{ $q\equiv3\pmod4$ }
$$\begin{array}{c|cccc}
 & u_{21} & u_{22} & u_{23} & u_{24} \\ \hline
v_6 & . & -q & . & -q \\
v_7 & -q & . & -q & . 
\end{array}$$
\end{minipage}
\end{table}

\begin{table}[ht]
\caption{(Modified) 2-parameter Green functions for $A_1(q)^3(q+1)$ for $q$ large. \label{tab:nsplit_a1a1a1_qp1}}
\scriptsize
$$\begin{array}{c|cccccccc}
 & u_{1} & u_{2} & u_{3} & u_{4} & u_{5} & u_{6} & u_{7} & u_{8} \\\hline
v_1 & -\Phi_1\Phi_3\Phi_4^2\Phi_6 & * & -\Phi_1\Phi_4 & -\Phi_1\Phi_4 & -\Phi_1\Phi_4 & -\Phi_1\Phi_4 & -\Phi_1\Phi_4 & -\Phi_1\Phi_4 \\
v_2 & . & -q^4\Phi_1 & . & . & . & . & q^2\Phi_4 & q^2\Phi_4 \\
v_3 & . & -q^4\Phi_1 & . & . & q^2\Phi_4 & q^2\Phi_4 & . & . \\
v_4 & . & -q^4\Phi_1 & q^2\Phi_4 & q^2\Phi_4 & . & . & . & . \\
v_5 & . & . & -q^2\Phi_1\Phi_4 & . & . & . & . & . \\
v_6 & . & . & . & -q^2\Phi_1\Phi_4 & . & . & . & . \\
v_7 & . & . & . & . & -q^2\Phi_1\Phi_4 & . & . & . \\
v_8 & . & . & . & . & . & -q^2\Phi_1\Phi_4 & . & . \\
v_9 & . & . & . & . & . & . & -q^2\Phi_1\Phi_4 & . \\
v_{10} & . & . & . & . & . & . & . & -q^2\Phi_1\Phi_4 \\
v_{11} & . & . & . & . & . & . & . & . \\
v_{12} & . & . & . & . & . & . & . & . \\
v_{13} & . & . & . & . & . & . & . & . \\
v_{14} & . & . & . & . & . & . & . & . \\
\end{array}$$

$$\begin{array}{c|cccccccccccc}
 & u_{9} & u_{10} & u_{11} & u_{12} & u_{13} & u_{14} & u_{15} & u_{16} & u_{17} & u_{18} & u_{19} & u_{20} \\\hline
v_1 & -\Phi_1 & -\Phi_1 & -\Phi_1 & -\Phi_1 & 1 & 1 & . & . & . & . & . & . \\
v_2 & . & . & . & . & . & -2q & . & . & . & . & 1 & 1 \\
v_3 & . & . & . & . & . & -2q & . & . & 1 & 1 & . & . \\
v_4 & . & . & . & . & . & -2q & 1 & 1 & . & . & . & . \\
v_5 & q^2 & . & . & q^2 & . & q\Phi_2 & . & -\Phi_1 & . & . & . & . \\
v_6 & . & q^2 & q^2 & . & . & q\Phi_2 & -\Phi_1 & . & . & . & . & . \\
v_7 & q^2 & q^2 & . & . & . & q\Phi_2 & . & . & . & -\Phi_1 & . & . \\
v_8 & . & . & q^2 & q^2 & . & q\Phi_2 & . & . & -\Phi_1 & . & . & . \\
v_9 & q^2 & . & q^2 & . & . & q\Phi_2 & . & . & . & . & . & -\Phi_1 \\
v_{10} & . & q^2 & . & q^2 & . & q\Phi_2 & . & . & . & . & -\Phi_1 & . \\
v_{11} & q^3\Phi_1 & . & . & . & -\frac{q\Phi_1^2}{4} & -\frac{q\Phi_2^2}{4} & . & \frac{\Phi_1\Phi_2}{2} & . & \frac{\Phi_1\Phi_2}{2} & . & \frac{\Phi_1\Phi_2}{2} \\
v_{12} & -\frac{q\Phi_1^2}{4} & -\frac{q\Phi_2^2}{4} & \frac{\Phi_1\Phi_2}{2} & . & . & \frac{\Phi_1\Phi_2}{2} & \frac{\Phi_1\Phi_2}{2} & . & . & q^3\Phi_1 & . & . \\
v_{13} & -\frac{q\Phi_1^2}{4} & -\frac{q\Phi_2^2}{4} & \frac{\Phi_1\Phi_2}{2} & . & \frac{\Phi_1\Phi_2}{2} & . & . & \frac{\Phi_1\Phi_2}{2} & . & . & q^3\Phi_1 & . \\
v_{14} & . & . & . & q^3\Phi_1 & -\frac{q\Phi_1^2}{4} & -\frac{q\Phi_2^2}{4} & . & \frac{\Phi_1\Phi_2}{2} & \frac{\Phi_1\Phi_2}{2} & . & \frac{\Phi_1\Phi_2}{2} & . \\
\end{array}$$
\vskip 1pc
\begin{minipage}{0.6\textwidth}
$$\begin{array}{c|cccccccc}
 & u_{21} & u_{22} & u_{23} & u_{24} & u_{25} & u_{26} & u_{27} & u_{28} \\\hline
v_1 & . & . & . & . & . & . & . & . \\
v_2 & . & . & . & . & . & . & . & . \\
v_3 & . & . & . & . & . & . & . & . \\
v_4 & . & . & . & . & . & . & . & . \\
v_5 & . & . & 1 & 1 & . & . & . & . \\
v_6 & 1 & 1 & . & . & . & . & . & . \\
v_7 & . & 1 & 1 & . & . & . & . & . \\
v_8 & 1 & . & . & 1 & . & . & . & . \\ \cline{2-5}
v_9 & \multicolumn{4}{|c|}{\multirow{6}{*}{\Large{$\ast$}}} & . & . & . & . \\
v_{10} & \multicolumn{4}{|c|}{} & . & . & . & . \\
v_{11} & \multicolumn{4}{|c|}{} & 1 & . & . & . \\
v_{12} & \multicolumn{4}{|c|}{} & . & 1 & . & . \\
v_{13} & \multicolumn{4}{|c|}{} & . & . & 1 & . \\
v_{14} & \multicolumn{4}{|c|}{} & . & . & . & 1 \\ \cline{2-5}
\end{array}$$
$*=q^4-3q^3+3q^2-q+1$
\end{minipage}
\begin{minipage}{0.39\textwidth}
\centering{ $q\equiv1\pmod4$ }
$$\begin{array}{c|cccc}
 & u_{21} & u_{22} & u_{23} & u_{24} \\ \hline
v_9 & . & 1 & . & 1 \\
v_{10} & 1 & . & 1 & . \\
v_{11} & -\frac{\Phi_1}{4} & -\frac{q+3}{4} & -\frac{q+3}{4} & -\frac{q+3}{4} \\
v_{12} & -\frac{q+3}{4} & -\frac{q+3}{4} & -\frac{q+3}{4} & -\frac{\Phi_1}{4} \\
v_{13} & -\frac{q+3}{4} & -\frac{q+3}{4} & -\frac{\Phi_1}{4} & -\frac{q+3}{4} \\
v_{14} & -\frac{q+3}{4} & -\frac{\Phi_1}{4} & -\frac{q+3}{4} & -\frac{q+3}{4} \\
\end{array}$$

\centering{ $q\equiv3\pmod4$ }
$$\begin{array}{c|cccc}
 & u_{21} & u_{22} & u_{23} & u_{24} \\ \hline
v_9 & 1 & . & 1 & . \\
v_{10} & . & 1 & . & 1 \\
v_{11} & -\frac{\Phi_2}{4} & -\frac{\Phi_2}{4} & -\frac{q+5}{4} & -\frac{\Phi_2}{4} \\
v_{12} & -\frac{\Phi_2}{4} & -\frac{q+5}{4} & -\frac{\Phi_2}{4} & -\frac{\Phi_2}{4} \\
v_{13} & -\frac{q+5}{4} & -\frac{\Phi_2}{4} & -\frac{\Phi_2}{4} & -\frac{\Phi_2}{4} \\
v_{14} & -\frac{\Phi_2}{4} & -\frac{\Phi_2}{4} & -\frac{\Phi_2}{4} & -\frac{q+5}{4} \\
\end{array}$$
\end{minipage}
\end{table}


\begin{table}[ht]
\caption{(Modified) 2-parameter Green functions for $A_1(q^2)(q^2+1)$ for $q$ large. \label{nsplit_a1q2}}
\scriptsize
$$\begin{array}{c|cccccccc}
 & u_1 & u_2 & u_3 & u_4 & u_5 & u_6 & u_7 & u_8 \\
 \hline
v_1 & -\Phi_1^3\Phi_2^3\Phi_3\Phi_6 &  \Phi_1^2\Phi_2^2 &  \Phi_1^2\Phi_2^2 &  \Phi_1^2\Phi_2^2 &  \Phi_1^2\Phi_2^2 &  \Phi_1^2\Phi_2^2 &  -\Phi_1\Phi_2 &  -\Phi_1\Phi_2  \\
v_2 & . & . & . & . & . & . & -q^2\Phi_1^2\Phi_2^2 & . \\
v_3 & . & . & . & . & . & . & . & -q^2\Phi_1^2\Phi_2^2 \\
\end{array}$$

$$\begin{array}{c|cccccccccccc}
 & u_9 & u_{10} & u_{11} & u_{12} & u_{13} & u_{14} & u_{15} & u_{16} & u_{17} & u_{18} & u_{19} & u_{20}\\
 \hline
v_1 &  -\Phi_1\Phi_2 &  -\Phi_1\Phi_2 &  -\Phi_1\Phi_2 &  -\Phi_1\Phi_2 &  -\Phi_1 &  \Phi_2 &  . &  . &  . &  . &  1 &  1 \\
v_2 & q^2\Phi_1\Phi_2 & . & q^2\Phi_1\Phi_2 & . &  \frac{q\Phi_1^2}{2} &  -\frac{q\Phi_2^2}{2} & -\frac{\Phi_1\Phi_2}{2} & -\frac{\Phi_1\Phi_2}{2} & -\frac{\Phi_1\Phi_2}{2} & -\frac{\Phi_1\Phi_2}{2} & . & . \\
v_3 & . & q^2\Phi_1\Phi_2 & . & q^2\Phi_1\Phi_2 &  \frac{q\Phi_1^2}{2} &  -\frac{q\Phi_2^2}{2} & -\frac{\Phi_1\Phi_2}{2} & -\frac{\Phi_1\Phi_2}{2} & -\frac{\Phi_1\Phi_2}{2} & -\frac{\Phi_1\Phi_2}{2} & . & . \\
\end{array}$$

$$\begin{array}{c|cccccccc}
 & u_{21} & u_{22} & u_{23} & u_{24} & u_{25} & u_{26} & u_{27} & u_{28} \\
 \hline
v_1 &  . &  . &  . &  . &  . &  . &  . &  . \\ \cline{2-5}
v_2 & \multicolumn{4}{|c|}{\multirow{2}{*}{\Large{$\ast$}}} & . &  1 & . &  1 \\
v_3 & \multicolumn{4}{|c|}{} &  1 & . &  1 & . \\ \cline{2-5}
\end{array}$$
\vskip 1pc
\begin{minipage}{0.5\textwidth}
\centering{ $q\equiv1\pmod4$ }
$$\begin{array}{c|cccc}
 & u_{21} & u_{22} & u_{23} & u_{24} \\ \hline
v_2 & \frac{\Phi_2}{2} & -\frac{\Phi_1}{2} & \frac{\Phi_2}{2} & -\frac{\Phi_1}{2} \\
v_3 & -\frac{\Phi_1}{2} & \frac{\Phi_2}{2} & -\frac{\Phi_1}{2} & \frac{\Phi_2}{2} 
\end{array}$$
\end{minipage}
\begin{minipage}{0.49\textwidth}

\centering{ $q\equiv3\pmod4$ }
$$\begin{array}{c|cccc}
 & u_{21} & u_{22} & u_{23} & u_{24} \\ \hline
v_2 & -\frac{\Phi_1}{2} & \frac{\Phi_2}{2} & -\frac{\Phi_1}{2} & \frac{\Phi_2}{2} \\
v_3 & \frac{\Phi_2}{2} & -\frac{\Phi_1}{2} & \frac{\Phi_2}{2} & -\frac{\Phi_1}{2} 
\end{array}$$
\end{minipage}
\end{table}


\begin{table}[ht]
\caption{Unipotent classes of $\Spin_8^+(q)$. The table shows the Jordan normal form, the numbering of the class, a representative in Steinberg presentation ($\left\langle\mu\right\rangle=\FF_q^\times$) and the image under the triality automorphism. On the classes with Jordan form $53$ the triality automorphism acts differently depending on the congruence of $q$. The action for $q\equiv1\,(4)$ is given first. \label{tab:classes_G}}
$\begin{array}{c|c|l|c}
1^8 & u_1 & 1 & u_1 \\ \hline

2^21^4 & u_2 & u_1(1) & u_2 \\ \hline

2^4+ & u_3 & u_1(1)u_2(1) & u_7 \\ 
 & u_4 & u_1(\mu)u_2(1) & u_8 \\ \hline

2^4- & u_5 & u_1(1)u_4(1) & u_3 \\ 
 & u_6 & u_1(\mu)u_4(1) & u_4 \\ \hline

31^5 & u_7 & u_2(1)u_4(1) & u_5 \\ 
 & u_8 & u_2(\mu)u_4(1) & u_6 \\ \hline

32^21 & u_9 & u_1(1)u_2(1)u_4(1) & u_9 \\ 
 & u_{10} & u_1(1)u_2(\mu)u_4(1) & u_{12} \\
 & u_{11} & u_1(\mu)u_2(1)u_4(1) & u_{10} \\ 
 & u_{12} & u_1(\mu)u_2(\mu)u_4(1) & u_{11} \\ \hline

3^21^2 & u_{13} & u_1(1)u_3(1) & u_{13} \\ \hline

3^21^2 & u_{14} & u_1(\mu)u_2(1)u_4(1)u_{12}(1) & u_{14} \\ \hline

4^2+ & u_{15} & u_1(1)u_2(1)u_3(1) & u_{19} \\
 & u_{16} & u_1(\mu)u_2(1)u_3(1) & u_{20} \\ \hline

4^2- & u_{17} & u_1(1)u_3(1)u_4(1) & u_{15} \\
 & u_{18} & u_1(\mu)u_3(1)u_4(1) & u_{16} \\ \hline

51^3 & u_{19} & u_2(1)u_3(1)u_4(1) & u_{17} \\
 & u_{20} & u_2(\mu)u_3(1)u_4(1) & u_{18} \\ \hline

53 & u_{21} & u_1(1)u_2(1)u_3(1)u_{10}(1) & u_{21}/u_{22} \\
 & u_{22} & u_1(1)u_2(1)u_3(\mu)u_{10}(1) & u_{23}/u_{24} \\
 & u_{23} & u_1(\mu)u_2(1)u_3(1)u_{10}(1) & u_{24}/u_{23} \\
 & u_{24} & u_1(\mu)u_2(1)u_3(\mu)u_{10}(1) & u_{22}/u_{21} \\ \hline

71 & u_{25} & u_1(1)u_2(1)u_3(1)u_4(1) & u_{25} \\
 & u_{26} & u_1(1)u_2(\mu)u_3(1)u_4(1) & u_{28} \\
 & u_{27} & u_1(\mu)u_2(1)u_3(1)u_4(1) & u_{26} \\
 & u_{28} & u_1(\mu)u_2(\mu)u_3(1)u_4(1) & u_{27} \\
\end{array}$
\end{table}

\begin{table}[ht]
\caption{Representatives of the unipotent classes of the Levi subgroups $A_3(q)(q-1)$ and $\tw2A_3(q)(q+1)$, where $\left\langle\rho\right\rangle=\FF_{q^2}^\times$ and $\mu=\rho^{q+1}$. \label{tab:classes_non_split_Levis}}
$\begin{array}{c|l}
v_1 & 1 \\ \hline

v_2 & u_2(1) \\ \hline

v_3 & u_2(1)u_4(1) \\
v_4 & u_2(\mu)u_4(1)\\ \hline

v_5 & u_2(1)u_3(1)\\ \hline

v_{6} & u_2(1)u_3(1)u_4(1) \\ 
v_7 & u_2(\mu)u_3(1)u_4(1) 
\end{array}$
\hspace{2cm}
$\begin{array}{c|l}
v_1 & 1 \\ \hline
v_2 & u_3(1) \\\hline
v_3 & u_2(1)u_4(1)\\ 
v_4 & u_2(\rho)u_4(\rho^q)\\ \hline
v_5 & u_2(\rho)u_4(\rho^q)u_6(1)u_7(1)\\ \hline
v_6 & u_2(1)u_4(1)u_3(1)\\ 
v_7 & u_2(\rho)u_4(\rho^q)u_3(1)\\ 
\end{array}$
\end{table}

\begin{table}[ht]
\caption{Representatives of the unipotent classes of the Levi subgroups $A_1(q)^3(q-1)$ and $A_1(q)^3(q+1)$ with the action of triality, where $\left\langle\mu\right\rangle=\FF_q^\times$. \label{tab:classes_split_Levis}}

$\begin{array}{c|l|c}
v_1 & 1 & v_1 \\ \hline

v_2 & u_1(1) & v_4 \\ \hline

v_3 & u_2(1) & v_2 \\ \hline

v_4 & u_4(1) & v_3 \\ \hline

v_5 & u_1(1)u_2(1) & v_9 \\ 
v_6 & u_1(\mu)u_2(1) & v_{10} \\ \hline

v_7 & u_1(1)u_4(1) & v_5 \\ 
v_8 & u_1(\mu)u_4(1) & v_6 \\ \hline

v_9 & u_2(1)u_4(1) & v_7 \\ 
v_{10} & u_2(\mu)u_4(1) & v_8 \\ \hline

v_{11} & u_1(1)u_2(1)u_4(1) & v_{11} \\ 
v_{12} & u_1(1)u_2(\mu)u_4(1) & v_{14} \\ 
v_{13} & u_1(\mu)u_2(1)u_4(1) & v_{12} \\ 
v_{14} & u_1(\mu)u_2(\mu)u_4(1) & v_{13} 
\end{array}$
\vskip 2pc
\end{table}

\begin{table}[ht]
\caption{Representatives of the unipotent classes of a non-split Levi subgroup of type $A_1(q^2)(q^2+1)$, where  $\left\langle\rho\right\rangle=\FF_{q^2}^\times$. \label{tab:classes_last_Levi}}
$\begin{array}{c|l}
v_1 & 1 \\ \hline
v_2 & u_2(1)u_4(1) \\
v_3 & u_2(\rho)u_4(\rho^q)\\ 
\end{array}$
\end{table}



\end{document}